\def\NZQ{\mathbb}               % the font for N,Z,Q,R,C
\def\ZZ{{\NZQ Z}}
\def\RR{{\NZQ R}}
\def\frk{\mathfrak}               % font for "Fraktur"
\def\Phi{{\frk N}}
\def\ab{{\bold a}}
\def\bb{{\bold b}}
\def\eb{{\bold e}}
\def\xb{{\bold x}}
\def\yb{{\bold y}}
\def\zb{{\bold z}}
\def\vb{{\bold v}}
\def\opn#1#2{\def#1{\operatorname{#2}}} % to make operators
\opn\chara{char} 
\opn\length{\ell} 
\opn\pd{pd} 
\opn\rk{rk}
\opn\projdim{proj\,dim} 
\opn\injdim{inj\,dim} 
\opn\rank{rank}
\opn\depth{depth} 
\opn\grade{grade} 
\opn\height{height}
\opn\embdim{emb\,dim} 
\opn\codim{codim}
\opn\Tr{Tr} 
\opn\bigrank{big\,rank}
\opn\superheight{superheight}
\opn\lcm{lcm}
\opn\trdeg{tr\,deg}%\emph{
\opn\reg{reg} 
\opn\lreg{lreg} 
\opn\ini{in} 
\opn\lpd{lpd}
\opn\size{size}
\opn\mult{mult}
\opn\dist{dist}
\opn\cone{cone}
\opn\lex{lex}
\opn\rev{rev}
\opn\div{div} \opn\Div{Div} \opn\cl{cl} \opn\Cl{Cl}
\opn\Spec{Spec} \opn\Supp{Supp} \opn\supp{supp} \opn\Sing{Sing}
\opn\Ass{Ass} \opn\Min{Min}
\opn\Ann{Ann} \opn\Rad{Rad} \opn\Soc{Soc}
\opn\Syz{Syz} \opn\Im{Im} \opn\Ker{Ker} \opn\Coker{Coker}
\opn\Am{Am} \opn\Hom{Hom} \opn\Tor{Tor} \opn\Ext{Ext}
\opn\End{End} \opn\Aut{Aut} \opn\id{id} \opn\ini{in}
\opn\nat{nat}
\opn\pff{pf}%   \pf exists already
\opn\Pf{Pf} \opn\GL{GL} \opn\SL{SL} \opn\mod{mod} \opn\ord{ord}
\opn\Gin{Gin}
\opn\Hilb{Hilb}\opn\adeg{adeg}\opn\std{std}\opn\ip{infpt}
\opn\Pol{Pol}
\opn\sat{sat}
\opn\Var{Var}
\opn\Gen{Gen}
\opn\aff{aff} \opn\con{conv} \opn\relint{relint} \opn\st{st}
\opn\lk{lk} \opn\cn{cn} \opn\core{core} \opn\vol{vol}
\opn\link{link} \opn\star{star}
\opn\gr{gr}
\def\Pc{{\mathcal P}}
\def\Qc{{\mathcal Q}}
\def\pot#1#2{#1[\kern-0.28ex[#2]\kern-0.28ex]}
\opn\dirlim{\underrightarrow{\lim}}
\opn\inivlim{\underleftarrow{\lim}}
\let\union=\cup
\def\Implies{\ifmmode\Longrightarrow \else
	\unskip${}\Longrightarrow{}$\ignorespaces\fi}
\def\implies{\ifmmode\Rightarrow \else
	\unskip${}\Rightarrow{}$\ignorespaces\fi}
\def\iff{\ifmmode\Longleftrightarrow \else
	\unskip${}\Longleftrightarrow{}$\ignorespaces\fi}
\newtheorem{Theorem}{Theorem}[section]
\newtheorem{Lemma}[Theorem]{Lemma}
\newtheorem{Corollary}[Theorem]{Corollary}
\newtheorem{Proposition}[Theorem]{Proposition}
\newtheorem{Example}[Theorem]{Example}
\newtheorem{Definition}[Theorem]{Definition}
\newtheorem{Conjecture}[Theorem]{Conjecture}
\newtheorem{Question}[Theorem]{Question}
\opn\dis{dis}
\opn\height{height}
\opn\dist{dist}
\def\pnt{{\raise0.5mm\hbox{\large\bf.}}}
\opn\Lex{Lex}
\opn\conv{conv}
\opn\inter{int}
\begin{document}
%%%%%%%%%%%%%%%%%%%%%%%%%%%%%%%%%%%%%%%%
%% title
%%%%%%%%%%%%%%%%%%%%%%%%%%%%%%%%%%%%%%%%
\title{Cayley sums and Minkowski sums of  lattice polytopes}
\author[A. Tsuchiya]{Akiyoshi Tsuchiya}
\address{Akiyoshi Tsuchiya,
	Department of Information Science,
	Faculty of Science,
	Toho University,
	Miyama, Funabashi-shi, Chiba 274-8510, Japan} 
\email{akiyoshi@is.sci.toho-u.ac.jp}

\subjclass[2010]{05E40, 13H10, 52B20}
\date{}
\keywords{Cayley sum, Minkowski sum, $2$-convex-normal polytope,  integer decomposition property, level polytope
}
\thanks{The author was partially supported by KAKENHI 16J01549, 19K14505 and 22K13890.}

\begin{abstract}
	In this paper, we discuss the integer decomposition property for Cayley sums and Minkowski sums of lattice polytopes.
	In fact, we characterize when Cayley sums have the integer decomposition property in terms of Minkowski sums. 
	Moreover, by using this characterization, we consider when Cayley sums and Minkowski sums of $2$-convex-normal lattice polytopes have the integer decomposition property.
	 Finally, we also discuss the level property for Minkowski sums and Cayley sums.
\end{abstract} 

\maketitle
\section{Introduction}
A \textit{lattice polytope} is a convex polytope all of whose vertices have integer coordinates.
In the present paper, we discuss two algebraic properties of lattice polytopes, which are called the integer decomposition property (IDP) and the level property (see Section \ref{sec:pre} for the definitions).
IDP polytopes turn up in many fields of mathematics such as algebraic geometry, where they correspond to projectively normal embeddings of toric varieties, and commutative algebra, where they correspond to standard graded Cohen-Macaulay domains (see \cite{BG}).
Moreover, the integer decomposition property is particularly important in the theory and application of integer programming \cite[\S 22.10]{integer}. On the other hand, the level property is a generalization of the Gorenstein property, which gives important examples in combinatorial commutative algebra, mirror symmetry and tropical geometry (for details we refer to \cite{Batyrev,JK}), and it has only fairly recently been examined for certain classes of polytopes (e.g., \cite{Hig,HY,KO}).

Our interest is to determine when a lattice polytope is IDP or level.
For instance, every lattice polygon is IDP and level (cf. \cite{HH,HY}).
One of the most famous results on this problem is the following:
\begin{Theorem}[{\cite[Theorem 1.3.3]{BGT}}]
\label{orginal}
	Let $\Pc \subset \RR^N$ be a lattice polytope.
Then we obtain the following:
	\begin{enumerate}
	\item[{\rm (1)}] For any positive integer $n \geq \dim(\Pc)-1$, $n\Pc$ is IDP;
 \item[{\rm (2)}] For any positive integer $n \geq \dim(\Pc)+1$, $n\Pc$ is (IDP and) level of index $1$.
\end{enumerate}
\end{Theorem}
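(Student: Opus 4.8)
The plan is to reduce both assertions to a statement about a single lattice simplex and then argue inside the cone over that simplex.

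\textit{Reduction.} Fix a triangulation of $\Pc$ into lattice simplices whose vertices are among the vertices of $\Pc$. For (1): any lattice point of $m\Pc$ lies in $m\sigma$ for some simplex $\sigma$ of the triangulation, and $m\sigma\subseteq m\Pc$; hence it suffices to prove, for every lattice $d$-simplex $\sigma$ with $d=\dim(\Pc)$ and every $k\ge 1$, that
\[
kn\sigma\cap\ZZ^N=n\sigma\cap\ZZ^N+(k-1)n\sigma\cap\ZZ^N .
\]
For (2): if $z\in\inter(kn\Pc)\cap\ZZ^N$, then $z/(kn)\in\inter(\Pc)$ lies in the relative interior of a unique face $G$ of the triangulation, and then $\relint(G)\subseteq\inter(\Pc)$; so if $z'$ is a lattice point of $nG$ with $z'/n\in\relint(G)$ and $z-z'\in(k-1)nG\cap\ZZ^N$, then automatically $z'\in\inter(n\Pc)\cap\ZZ^N$ and $z-z'\in(k-1)n\Pc\cap\ZZ^N$. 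Thus (2) reduces to the corresponding interior identity in a lattice $e$-simplex $G$ with $e\le d$; this passage to a face may drop the dimension, which is precisely why the weaker hypothesis $n\ge d+1$ (so that $n\ge e+1$) appears here instead of $n\ge d-1$. Finally $\operatorname{codeg}(\Pc)\le d+1$ (the $h^{*}$-polynomial is a nonzero polynomial, of constant term $1$), so $\operatorname{codeg}(n\Pc)=\lceil\operatorname{codeg}(\Pc)/n\rceil=1$ when $n\ge d+1$; thus $\inter(n\Pc)$ contains a lattice point and the index is indeed $1$, and only the additivity remains.

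\textit{Proof of (1).} With $v_0,\dots,v_d$ the vertices of $\sigma$, put $\tau=\cone\{(v_0,1),\dots,(v_d,1)\}\subseteq\RR^{N+1}$ and $M=\tau\cap\ZZ^{N+1}$, graded by the last coordinate, so $M_h=h\sigma\cap\ZZ^N$. Because $\tau$ is simplicial, every $z\in M$ is written uniquely as $z=p+\sum_{i=0}^{d}c_i(v_i,1)$ with $c_i\in\ZZ_{\ge0}$ and $p=\sum\lambda_i(v_i,1)$ in the half-open parallelepiped ($0\le\lambda_i<1$); its degree $m:=\deg p=\sum\lambda_i$ is an integer with $0\le m\le d$. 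Now take $k\ge2$ and $z\in M_{kn}$, so $\sum c_i=kn-m$. If $m\le n$, choose integers $0\le b_i\le c_i$ with $\sum b_i=n-m$ (possible since $\sum c_i=kn-m\ge n-m\ge 0$); then $p+\sum b_i(v_i,1)\in M_n$ and $z-\bigl(p+\sum b_i(v_i,1)\bigr)=\sum(c_i-b_i)(v_i,1)\in M_{(k-1)n}$, as desired. Since $m\le d$, this settles all $n\ge d$. The only remaining case is $n=d-1$ together with $m=d$; here every $\lambda_i\in(0,1)$, and one brings in the reflected vector $p'=\sum_i(v_i,1)-p\in M_1$, rewrites $z$ through $p=\sum_i(v_i,1)-p'$ so that the surplus degree of $p$ is absorbed into the rays $(v_i,1)$ at the cost of a single copy of $p'$, and verifies --- after choosing appropriately which rays to move --- that the resulting coefficients, which involve the numbers $2\lambda_i-1$, can be kept nonnegative. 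That verification is exactly the point at which the sharp constant $d-1$ is consumed, and it is the main obstacle in (1).

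\textit{Proof of (2).} One runs the same peeling inside the simplex $G$, but now the degree-$n$ summand $z'$ must be interior, i.e.\ have all barycentric coordinates positive, while still being coordinatewise dominated by $z$. Vertices $i$ with $\mu_i:=c_i+\lambda_i\ge1$ present no difficulty; the delicate vertices are those with $0<\mu_i<1$ (equivalently $c_i=0$ and $0<\lambda_i<1$), for which one needs a lattice point of $nG$ whose $i$-th barycentric coordinate lies in $(0,\lambda_i)$. The inequality $n\ge e+1$ supplies the necessary room: the lattice point $g_0+\cdots+g_e$ lies in the interior of $(e+1)G$ with all barycentric coordinates equal to $1$, and $z'$ is constructed around it (again invoking the reflected-vector device for the top-degree parallelepiped case). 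Reconciling the interiority constraint with the domination constraint is the technical heart of (2), and is the reason it is more delicate than (1).
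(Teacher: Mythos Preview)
This theorem is not proved in the paper at all; it is quoted from \cite{BGT} as a known background result, and no argument for it appears anywhere in the text. So there is no in-paper proof to compare your attempt against.

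On its own merits, your outline follows the classical route (triangulate, pass to the cone over a simplex, use the half-open parallelepiped decomposition), and the part you actually carry out --- the case $m\le n$ in (1), which covers all $n\ge d$ --- is correct. But you explicitly stop at the two places where the content lies. In (1), for $n=d-1$ and $m=d$, you introduce the reflected vector $p'$ and then say one ``verifies --- after choosing appropriately which rays to move --- that the resulting coefficients \dots\ can be kept nonnegative''; note that $p-p'=\sum_i(2\lambda_i-1)(v_i,1)$ need \emph{not} lie in the cone (nothing forces $\lambda_i\ge\tfrac12$), so this is not bookkeeping but the crux of the sharp bound, and you have not done it. In (2) you likewise announce that ``reconciling the interiority constraint with the domination constraint is the technical heart'' and then omit that reconciliation. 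As written, therefore, this is a roadmap rather than a proof: both of the steps you yourself flag as the main obstacles are left open.
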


We recall two well-known constructions of lattice polytopes.
\begin{Definition}{\em
		Let $\Pc_1,\ldots,\Pc_m \subset \RR^N$ be lattice polytopes.
		The \textit{Minkowski sum} $\Pc_1+\cdots+\Pc_m$ of $\Pc_1$, $\ldots,\Pc_m$ is defined by
		\[
		\Pc_1+\cdots+\Pc_m=\{\ab_1+\cdots+\ab_m \in \RR^N \mid \ab_1 \in \Pc_1,\ldots, \ab_m \in \Pc_m \} \subset \RR^N.
		\]
		The \textit{Cayley sum} $\Pc_1*\cdots*\Pc_m$ of $\Pc_1$, $\ldots,\Pc_m$ is defined by
		\[\Pc_1*\cdots*\Pc_m=\textnormal{conv}(\{\eb_1\} \times \Pc_1 \union \cdots \union\{\eb_m\} \times \Pc_m ) \subset \RR^{m}\times \RR^{N} = \RR^{m+N},
		\]
		where $\eb_1,\ldots,\eb_m$ are the canonical unit coordinate vectors of $\RR^m$.
	}
\end{Definition}
%These two constructions often appear in several papers at the same time.

In Section \ref{sec:relation}, we discuss relationships between Minkowski sums and Cayley sums for the IDP and the level property.
Let $\Pc_1,\ldots,\Pc_m \subset \RR^N$ be lattice polytopes.
We say that the tuple $(\Pc_1,\ldots,\Pc_m)$ is \textit{IDP} if for any subset $\emptyset \neq I \subset [m]:=\{1,\ldots,m\}$,
the equation
%\begin{equation}
%\label{eq1}
\[
\left(\sum_{i \in I}\Pc_{i} \right) \cap \ZZ^N=\sum_{i \in I}(\Pc_i \cap \ZZ^N) % \tag{0.1}
%		\end{equation}
\]
 is satisfied.
When $m=2$, this notion is introduced and discussed in \cite{HH}.
In \cite{Oda}, Oda asked for which pair of lattice polytopes $\Pc, \Qc \subset  \RR^N$, $(\Pc,\Qc)$ is IDP.
The following theorem is the first main theorem of the present paper.
\begin{Theorem}
	\label{thm;relation}
	Let $\Pc_1,\ldots,\Pc_m \subset \RR^N$ be lattice polytopes.
	Then we obtain the following:
	\begin{enumerate}
		%\item 	If $C(\Pc_1,\ldots,\Pc_m)$ is normal, then 
		%for any $\emptyset \neq I \subset [m]:=\{1,\ldots,m\}$, $\sum_{i \in I}\Pc_i$ is normal  and we have $(\sum_{i \in I}\Pc_i) \cap \ZZ^N=\sum_{i \in I}(\Pc_i \cap \ZZ^N)$;
		\item [{\rm (1)}] The Cayley sum $\Pc_1*\cdots*\Pc_m$ is IDP if and only if each $\Pc_i$ is IDP and for any nonnegative integers $a_1,\ldots, a_m$, the tuple $(a_1 \Pc_1,\ldots, a_m \Pc_m)$ is IDP.
			In this case, the Minkowski sum $\sum_{i=1}^ma_i\Pc_i$ is IDP for any $a_1,\ldots,a_m$.
		\item[{\rm (2)}] If $\Pc_1*\cdots*\Pc_m$ is level of index $m$, then $\Pc_1+\cdots+\Pc_m$ is level of index $1$.
	\end{enumerate}
\end{Theorem}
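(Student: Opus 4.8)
The plan is as follows. Write $\Qc := \Pc_1*\cdots*\Pc_m \subset \RR^m\times\RR^N = \RR^{m+N}$ and, for $a=(a_1,\dots,a_m)\in\ZZ_{\geq 0}^m$, set $|a|:=a_1+\cdots+a_m$. Everything rests on two descriptions of the lattice points of the dilates $n\Qc$, obtained from the coordinate projection $\pi\colon\RR^{m+N}\to\RR^m$ onto the first $m$ coordinates: $\pi$ carries $n\Qc$ onto $n\Delta$, where $\Delta:=\conv\{\eb_1,\dots,\eb_m\}$, and the fiber of $n\Qc$ over a point $a\in n\Delta$ is $\{a\}\times\sum_i a_i\Pc_i$. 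From this one reads off
\[
n\Qc\cap\ZZ^{m+N}=\bigcup_{a\in\ZZ_{\geq 0}^m,\ |a|=n}\{a\}\times\bigl(\textstyle\bigl(\sum_i a_i\Pc_i\bigr)\cap\ZZ^N\bigr),
\]
and, using that $\pi$ maps relative interiors onto relative interiors (so an interior lattice point of $n\Qc$ projects into $\relint(n\Delta)$, forcing every $a_i>0$) together with the standard fact $\relint\bigl(C\cap\pi^{-1}(a)\bigr)=\relint(C)\cap\pi^{-1}(a)$ when $a\in\relint(\pi(C))$, one gets
\[
\inter(n\Qc)\cap\ZZ^{m+N}=\bigcup_{a\in\ZZ_{>0}^m,\ |a|=n}\{a\}\times\bigl(\textstyle\inter\bigl(\sum_i a_i\Pc_i\bigr)\cap\ZZ^N\bigr).
\]
I expect establishing this second (interior) description to be the main obstacle; once both displays are in hand, the theorem reduces to bookkeeping with Minkowski sums.

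For part (1), comparing $n\Qc\cap\ZZ^{m+N}$ with $\bigl((n-1)\Qc\cap\ZZ^{m+N}\bigr)+\bigl(\Qc\cap\ZZ^{m+N}\bigr)$ over each fiber $\pi^{-1}(a)$ — using $\Qc\cap\ZZ^{m+N}=\bigcup_j\{\eb_j\}\times(\Pc_j\cap\ZZ^N)$ — one checks that $\Qc$ is IDP if and only if, for every $a\in\ZZ_{\geq 0}^m$ with $|a|\geq 1$, the identity
\[
\bigl(\textstyle\sum_i a_i\Pc_i\bigr)\cap\ZZ^N=\bigcup_{j:\,a_j\geq 1}\Bigl(\bigl((a_j-1)\Pc_j+\textstyle\sum_{i\ne j}a_i\Pc_i\bigr)\cap\ZZ^N+\bigl(\Pc_j\cap\ZZ^N\bigr)\Bigr)
\]
holds (inclusion $\supseteq$ being automatic). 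Granting this for all $a$, an induction on $|a|$ shows that $(\sum_i a_i\Pc_i)\cap\ZZ^N$ equals the Minkowski sum of $a_i$ copies of $\Pc_i\cap\ZZ^N$ taken over all $i$; taking $a=n\eb_i$ recovers that each $\Pc_i$ is IDP, taking $a$ supported on a subset $I\subseteq[m]$ gives that every tuple $(a_1\Pc_1,\dots,a_m\Pc_m)$ is IDP, and replacing $a$ by $ka$ gives that every $\sum_i a_i\Pc_i$ is IDP. Conversely, assume each $\Pc_i$ is IDP and every tuple $(a_1\Pc_1,\dots,a_m\Pc_m)$ is IDP. For a lattice point $q\in\sum_i a_i\Pc_i$, tuple-IDP with $I=\{i:a_i>0\}$ writes $q=\sum_{i:a_i\geq 1}q_i$ with $q_i\in(a_i\Pc_i)\cap\ZZ^N$; fixing some $j$ with $a_j\geq 1$ and splitting $q_j=q_j'+s$ with $q_j'\in(a_j-1)\Pc_j\cap\ZZ^N$ and $s\in\Pc_j\cap\ZZ^N$ by IDP of $\Pc_j$ exhibits $q$ on the right-hand side of the displayed identity, so $\Qc$ is IDP; the same two hypotheses also yield the Minkowski-sum formula above for every exponent vector, whence each $\sum_i a_i\Pc_i$ is IDP.

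For part (2), suppose $\Qc$ is level of index $m$. By the interior description, $\inter(t\Qc)\cap\ZZ^{m+N}=\emptyset$ for $t<m$ automatically, while $\inter(m\Qc)\cap\ZZ^{m+N}=\{\mathbf 1\}\times\bigl(\inter(\Pc_1+\cdots+\Pc_m)\cap\ZZ^N\bigr)$, where $\mathbf 1=(1,\dots,1)$ is the only element of $\ZZ_{>0}^m$ of coordinate sum $m$; hence "index $m$" forces $\inter(\Pc_1+\cdots+\Pc_m)\cap\ZZ^N\ne\emptyset$, i.e.\ $\Pc:=\Pc_1+\cdots+\Pc_m$ has index $1$. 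For the level decomposition, fix $k\geq 1$ and apply the defining identity of "level of index $m$" with $n=km$,
\[
\inter\bigl(km\,\Qc\bigr)\cap\ZZ^{m+N}=\bigl(\inter(m\Qc)\cap\ZZ^{m+N}\bigr)+\bigl((km-m)\Qc\cap\ZZ^{m+N}\bigr),
\]
then intersect both sides with $\pi^{-1}(k\mathbf 1)$. The interior description turns the left-hand side into $\{k\mathbf 1\}\times\bigl(\inter(k\Pc)\cap\ZZ^N\bigr)$ since $\sum_i k\Pc_i=k\Pc$; on the right-hand side the first summand lies entirely over $\mathbf 1$, so only the part of $(km-m)\Qc\cap\ZZ^{m+N}$ lying over $(k-1)\mathbf 1$ contributes, and by the first description that part is $\{(k-1)\mathbf 1\}\times\bigl((k-1)\Pc\cap\ZZ^N\bigr)$, turning the right-hand side into $\{k\mathbf 1\}\times\bigl(\inter(\Pc)\cap\ZZ^N+(k-1)\Pc\cap\ZZ^N\bigr)$. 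Comparing yields $\inter(k\Pc)\cap\ZZ^N=\inter(\Pc)\cap\ZZ^N+(k-1)\Pc\cap\ZZ^N$ for all $k\geq 1$, that is, $\Pc$ is level of index $1$. The one genuinely delicate ingredient in this whole scheme is the interior-lattice-point description of $n\Qc$ asserted in the first paragraph; modulo that, both parts are routine.
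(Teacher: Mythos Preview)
Your proposal is correct and follows essentially the same approach as the paper. Your two displayed fiber descriptions of $n\Qc\cap\ZZ^{m+N}$ and $\inter(n\Qc)\cap\ZZ^{m+N}$ are precisely the content of the paper's Lemmas~\ref{decomp1} and~\ref{decomp2}, which the paper proves by direct computation with vertex representations rather than by invoking general convex-analysis facts about projections and relative interiors; once those are in hand, the paper's argument and your bookkeeping coincide in substance, with only cosmetic differences in organization (you package part~(1) as a fiberwise identity and induct on $|a|$, while the paper works one lattice point at a time).
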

%It is known {\cite[Theorem 2.6]{BN}} that $\Pc_1*\cdots*\Pc_m$ is Gorenstein of index $m$ if and only if $\Pc_1+\cdots+\Pc_m$ is Gorenstein of index $1$.
%However, the converse of Theorem \ref{thm;relation} (2) does not hold in general (Example \ref{counter}). 

%Next, we discuss when Minkowski sums and Cayley sums are IDP or level.
In \cite{Hig}, Higashitani considered when the Minkowski sums of dilated polytopes are IDP or level.
In fact, he showed the following:
\begin{Theorem}[{\cite[Theorem 2.1]{Hig}}]
	\label{Higa}
	Let $\Pc_1,\ldots,\Pc_m \subset \RR^N$ be lattice polytopes. For each $i$, let $n_i$ be a positive integer.
	Then we obtain the following:
	\begin{enumerate}
			\item[{\rm (1)}] If for each $i$, $n_i \geq \dim (\Pc_i)$, then $n_1\Pc_1+\cdots+n_m\Pc_m$ is IDP;
		\item[{\rm (2)}] If for each $i$, $n_i \geq \dim (\Pc_i)+1$, then $n_1\Pc_1+\cdots+n_m\Pc_m$ is (IDP and) level of index $1$.
	\end{enumerate}
\end{Theorem}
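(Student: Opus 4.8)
The backbone of both parts is the following \emph{peeling lemma}: if $\Pc\subset\RR^N$ is a lattice polytope of dimension $d$, $\Rc\subset\RR^N$ is any lattice polytope, and $c\ge d+1$ is an integer, then
\[
(c\Pc+\Rc)\cap\ZZ^N=(\Pc\cap\ZZ^N)+\bigl((c-1)\Pc+\Rc\bigr)\cap\ZZ^N .
\]
The inclusion $\supseteq$ is clear, so only $\subseteq$ has content. I would fix a triangulation $\Pc=\bigcup_j\Delta_j$ whose simplices have all their vertices among the vertices of $\Pc$ (hence among $\Pc\cap\ZZ^N$), so that $\dim\Delta_j\le d$ for every $j$. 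Given $\vb\in(c\Pc+\Rc)\cap\ZZ^N$, write $\vb=\xb+\yb$ with $\xb\in c\Pc$ and $\yb\in\Rc$; then $\xb\in c\Delta_j$ for some $j$, so $\xb=\sum_i\lambda_iu_i$ in barycentric coordinates on $\Delta_j$, with $\lambda_i\ge0$ and $\sum_i\lambda_i=c$. Since $\Delta_j$ has at most $d+1\le c$ vertices, some $\lambda_{i_0}\ge1$, whence $\xb-u_{i_0}\in(c-1)\Delta_j\subseteq(c-1)\Pc$; thus $\vb-u_{i_0}=(\xb-u_{i_0})+\yb$ is a lattice point of $(c-1)\Pc+\Rc$ while $u_{i_0}\in\Pc\cap\ZZ^N$. (For $d=0$ the lemma is trivial.)

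\emph{Part (1).} Write $d_i=\dim\Pc_i$ and $\Qc=\sum_{i=1}^m n_i\Pc_i$; we must check $k\Qc\cap\ZZ^N=\Qc\cap\ZZ^N+(k-1)\Qc\cap\ZZ^N$ for every $k\ge1$, the case $k=1$ being trivial. Fix $k\ge2$ and $\vb\in k\Qc\cap\ZZ^N=\bigl(\sum_i(kn_i)\Pc_i\bigr)\cap\ZZ^N$. For $i=1,\dots,m$ in turn, apply the peeling lemma $n_i$ times, each time with $\Pc=\Pc_i$ and $\Rc$ the Minkowski sum of all the remaining (already-reduced) terms, so as to strip off $n_i$ lattice points of $\Pc_i$; while $\Pc_i$ is being processed the coefficient of $\Pc_i$ at which the lemma is invoked takes the values $kn_i,\,kn_i-1,\dots,(k-1)n_i+1$, and since $k\ge2$ and $n_i\ge d_i$ each of these is at least $(k-1)n_i+1\ge n_i+1\ge d_i+1$, so the lemma applies throughout. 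When the process terminates, the $\sum_i n_i$ extracted lattice points sum to a lattice point of $\Qc=\sum_i n_i\Pc_i$, while the remainder lies in $\bigl(\sum_i(k-1)n_i\Pc_i\bigr)\cap\ZZ^N=(k-1)\Qc\cap\ZZ^N$; hence $\Qc$ is IDP.

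\emph{Part (2).} Now $n_i\ge d_i+1$; we must show $\inter(\Qc)\cap\ZZ^N\ne\emptyset$ and, for every $k\ge1$, $\inter(k\Qc)\cap\ZZ^N=\inter(\Qc)\cap\ZZ^N+(k-1)\Qc\cap\ZZ^N$. The elementary identity $\relint(\ell\Pc)+(m-\ell)\Pc=\relint(m\Pc)$ for integers $1\le\ell\le m$ (a convex combination that puts positive weight on a relative-interior point lies in the relative interior), together with $\relint(A+B)=\relint(A)+\relint(B)$, gives over $\RR$
\[
\inter(k\Qc)=\sum_i\inter\bigl((kn_i)\Pc_i\bigr)=\sum_i\bigl(\inter(n_i\Pc_i)+(k-1)n_i\Pc_i\bigr)=\inter(\Qc)+(k-1)\Qc .
\]
The nonemptiness of $\inter(\Qc)\cap\ZZ^N$ follows, via a triangulation of each $\Pc_i$ and the same calculus, from the fact that a lattice $d$-simplex has an interior lattice point in $n\Delta$ for every $n\ge d+1$ (e.g.\ the sum of its $d+1$ vertices together with $n-d-1$ further copies of one vertex), since one may then add one such point per factor, using $n_i\ge d_i+1$. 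It remains to establish the \emph{integral} identity $(\inter(\Qc)+(k-1)\Qc)\cap\ZZ^N=\inter(\Qc)\cap\ZZ^N+(k-1)\Qc\cap\ZZ^N$ for $k\ge2$. For this I would rerun the peeling of part (1), but \emph{steer} it so that among the $n_i$ lattice points stripped from $\Pc_i$ there are $d_i+1$ affinely independent ones: then no facet of $\Pc_i$ contains all of them, so their sum lies in $\inter\bigl((d_i+1)\Pc_i\bigr)$, and by the identity above the sum of all $n_i$ of them lies in $\inter(n_i\Pc_i)$; hence the total extracted point lies in $\sum_i\inter(n_i\Pc_i)=\inter(\Qc)$ and the remainder in $(k-1)\Qc\cap\ZZ^N$.

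The main obstacle is precisely this steering. The peeling lemma delivers only \emph{some} admissible vertex of the current triangulation simplex, and a naive greedy choice may extract the same lattice point again and again; one must therefore show that over the $n_i$ steps allotted to $\Pc_i$ the extracted multiset can be forced to affinely span $\Pc_i$, and this is exactly where the slack $n_i\ge d_i+1$, rather than merely $n_i\ge d_i$, is used. I expect this to require a genuine idea --- say a judicious choice of triangulation of $\Pc_i$ together with an Ewald--Wessels-type covering of a dilated simplex by lattice translates --- rather than mere bookkeeping; the other ingredients, namely the peeling lemma, the relative-interior calculus, and the existence of interior lattice points, are routine.
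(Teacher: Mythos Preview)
Your proof of Part~(1) is correct and close in spirit to the paper's, though organized differently. The paper first abstracts a property of each factor---$n_i\Pc_i$ is \emph{$2$-convex-normal}, meaning $2(n_i\Pc_i) = n_i\Pc_i + (n_i\Pc_i)\cap\ZZ^N$, proved by exactly your Carath\'eodory/peeling argument---and then shows in one stroke that a Minkowski sum of $2$-convex-normal polytopes is IDP: split $\xb\in k\Qc$ as $\sum_i\xb_i$ with each $\xb_i\in k(n_i\Pc_i)$ a \emph{real} point, use $2$-convex-normality to write each $\xb_i$ as one real point of $n_i\Pc_i$ plus $k-1$ lattice points of $n_i\Pc_i$, and observe that the sum of the $m$ real pieces is forced to be a lattice point of $\Qc$. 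Your direct lattice-point peeling is a valid reorganization of the same computation.

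For Part~(2), however, there is a genuine gap---precisely the one you flag. The ``steering'' you propose (forcing the extracted points to affinely span each $\Pc_i$) is not only hard to arrange but unnecessary, and aiming for it obscures the right idea. The paper introduces the interior analogue of $2$-convex-normality,
\[
\inter\bigl(2(n_i\Pc_i)\bigr)=\inter(n_i\Pc_i)+(n_i\Pc_i)\cap\ZZ^N,
\]
and shows each $n_i\Pc_i$ satisfies it when $n_i\ge d_i+1$: the same Carath\'eodory argument works, the extra $+1$ now guaranteeing a barycentric coefficient strictly $>1$, so that after peeling a vertex all coefficients remain positive and the residual point stays in the interior. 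With this in hand one simply rerun the Part~(1) argument: given a lattice point $\xb\in\inter(k\Qc)$, write $\xb=\sum_i\xb_i$ with $\xb_i\in\inter\bigl(k(n_i\Pc_i)\bigr)$ real, peel $k-1$ lattice points of $n_i\Pc_i$ from each $\xb_i$ leaving a \emph{real} point $\yb_i\in\inter(n_i\Pc_i)$, and then $\sum_i\yb_i\in\inter(\Qc)$ is forced to be a lattice point because everything else in the decomposition of $\xb$ already is. The interior condition is carried by a single real point per factor, not assembled from many lattice points; this is the idea your sketch is missing.
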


Therefore, Theorems \ref{thm;relation} and \ref{Higa} naturally lead us to consider the following question:
\begin{Question}
	\label{ques}
	Let $\Pc_1,\ldots,\Pc_m \subset \RR^N$ be lattice polytopes. For each $i$, let $n_i$ be a positive integer.
	\begin{enumerate}
		\item[{\rm (1)}] Is $n_1\Pc_1*\cdots*n_m\Pc_m$ IDP if $n_i \geq \dim (\Pc_i)$ for all $i=1,\ldots,m$ ?
		\item[{\rm (2)}] Is $n_1\Pc_1*\cdots*n_m\Pc_m$ level of index $m$ if  $n_i \geq \dim (\Pc_i)+1$ for all $i=1,\ldots,m$?
	\end{enumerate}
\end{Question}
In the present paper, we show that the answer to Question \ref{ques} (2) is positive (Corollary \ref{cor:leveldi}), but that to Question \ref{ques} (1) is negative in general (Corollary \ref{cor:IDPdi} and Example \ref{ex:IDPcounter}).  
To address these questions, we focus on $2$-convex-normal lattice polytopes and $2$-convex-level lattice polytopes.
In particular, we determine when Minkowski sums and Cayley sums of $2$-convex-normal (resp. $2$-convex-level) lattice polytopes are IDP (resp. level) (Theorems \ref{thm:2cn} and \ref{thn:2cl}).
%A lattice polytope $\Pc \subset \RR^N$ is called \textit{2-convex-level} if $\inter(2\Pc)=\inter(\Pc)+(\Pc \cap \ZZ^N)$.
%Note that 2-convex-level lattice polytopes are always level (Proposition \ref{prop:2cl}).
%By discussing when a dilated polytope is 2-convex-normal (resp. 2-convex-level) (Lemmas \ref{lem:2cn} and \ref{lem:2cl}) and when the Minkowski sum and the Cayley sum of 2-convex-normal (resp. 2-convex-level) lattice polytopes are IDP (resp. level) (Theorems \ref{thm:2cn} and \ref{thn:2cl}),
%we give an answer to Question \ref{ques}.
%
%The present paper is organized as follows:
%In Section 1, we will discuss relations between Minkowski sums and Cayley sums.
%In particular, we will prove Theorem \ref{thm;relation}.
%In Section 2, we will give an answer to Question \ref{ques} (1).
%In particular, we will prove Theorem \ref{thm:2cn}.
%Finally, in Section 3, we will give an answer to Question \ref{ques} (2).
%In particular, we will prove Theorem \ref{thn:2cl}.
%Higashitani proved the following:
%\begin{Theorem}[{\cite[Theorem 2.1]{Hig}}]
%	\label{Higashitani}
%	Let $\Pc_1,\ldots,\Pc_m \subset \RR^N$ be lattice polytopes.
%	Then we obtain the followings:
%	\begin{enumerate}
%		\item For any positive integers $n_1,\ldots,n_m$ with $n_i \geq \dim \Pc_i$ for each $i$, $n_1\Pc_1+\cdots + n_m \Pc_m$ is IDP;
%		\item For any positive integers $n_1,\ldots,n_m$ with $n_i \geq \dim \Pc_i+1$ for each $i$, $n_1\Pc_1+\cdots + n_m \Pc_m$ is level of index  $1$.
%	\end{enumerate}
%\end{Theorem}

\section{Preliminaries}
\label{sec:pre}
In this section, we give the definitions of IDP polytopes and level polytopes, and recall their properties.
Let $\Pc \subset \RR^N$ be a lattice polytope and let $\dim(\Pc)$ denote the dimension of $\Pc$. 
We say that a lattice polytope $\Pc$ has \textit{the integer decomposition property} if for each integer $n \geq 1$, 
\[
n\Pc \cap \ZZ^N=((n-1)\Pc \cap \ZZ^N)+(\Pc \cap \ZZ^N), 
\]
where $n\Pc$ is the $n$th dilated polytope of $\Pc$, i.e., $n\Pc=\{n\xb : \xb \in \Pc \}$.
A lattice polytope which has the integer decomposition property is called \textit{IDP}.

Let us explain a connection between IDP polytopes and commutative algebras.
Let $K$ be a field.
Given a graded noetherian commutative ring $A=\oplus_{i =0}^{\infty}A_i$ with $A_0=K$, we say that $A$ is {\it standard graded} if $A=K[A_1]$, i.e., $A$ is generated by $A_1$ as a $K$-algebra and {\it semi-standard graded} if $A$ is finitely generated as a $K[A_1]$-module.
We associate to a lattice polytope $\Pc$ a semi-standard graded $K$-algebra.
Let $K[{\bf X}^{\pm 1}, T]=K[X_1^{\pm1}, \ldots, X_N^{\pm 1}, T]$ be a subring of the the Laurent polynomial ring in $N+1$ variables over $K$.
We define the $K$-algebra $K[\Pc]$ as follows:
\[
K[\Pc]=K[{\bf X}^{\ab}T^n : \ab \in n\Pc \cap \ZZ^N, n \in \ZZ_{\geq 0}] \subset K[{\bf X}^{\pm 1}, T],
\]
where for a lattice point $\ab=(a_1,\ldots,a_N) \in \ZZ^N$, ${\bf X}^{\ab}T^n=X_1^{a_1}\cdots X_N^{a_N}T^n$ denotes a Laurent monomial in $K[{\bf X}^{\pm 1}, T]$.
It is known that $K[\Pc]$ is a semi-standard graded normal Cohen-Macaulay domain of dimension $\dim(\Pc)+1$ by setting $\deg({\bf X}^{\ab}T^n)=n$.
Moreover, $K[\Pc]$ is standard graded if and only if $\Pc$ is IDP.
We call this graded $K$-algebra $K[\Pc]$ the {\it Ehrhart ring} of $\Pc$.
We refer the reader to \cite{BG} for the detailed information about Ehrhart rings.

Theorem \ref{orginal} implies
that ``large polytopes" are IDP.
The following result also says the same thing.
\begin{Theorem}[{\cite[Corollary 2.7]{HH}}]
	\label{thm:hh}
Let $\Pc \subset \RR^N$ be a lattice polytope of dimension $d$.
If every edge of $\Pc$ has lattice length $\geq 2d(d+1)$, then $P$ is IDP.
\end{Theorem}
This theorem follows from the fact that  $2$-convex-normal lattice polytopes are IDP ({\cite[Corollary 2.6]{HH}}).
A lattice polytope $\Pc \subset  \RR^N$  is called \textit{$2$-convex-normal} if 
\[2\Pc=(\Pc \cap \ZZ^N)+\Pc.\]
It is known that lattice polytopes of dimension $d$ each of whose edges has lattice length $\geq 2d(d+1)$ are $2$-convex-normal (\cite[Theorem 1.2]{Gub}).
Hence Theorem \ref{thm:hh} follows from this fact.

Next, we recall what level polytopes are.
For a subset  $A \subset \RR^N$, let $\inter(A)$ denote the relative interior of $A$ with respect to the affine subspace of $\RR^N$ spanned by $A$.
We say that a lattice polytope $\Pc$ is \textit{level} of index $r$,
if $r = \min \{t \in \ZZ_{>0} : \inter(t\Pc) \cap \ZZ^N \neq \emptyset  \}$ and for each integer $n \geq r$, 
\[
\inter(n\Pc) \cap \ZZ^N=(\inter(r\Pc) \cap \ZZ^N)+((n-r)\Pc \cap \ZZ^N).
\]
In particular, if $\Pc$ is level of index $r$ and $|\inter(r\Pc) \cap \ZZ^N|=1$,
then $\Pc$ is called \textit{Gorenstein} of index $r$.

Now, we review a connection between level polytopes and commutative algebras.
Let $R$ be a Cohen-Macaulay graded ring with canonical module $\omega_R$.
Then the number 
\[
a(R)=-\min\{
i : (\omega_{R})_i \neq 0
\}
\]
is called the {\it $a$-invariant} of $R$ (see \cite{BH} for the definition).
It then follows that $a(K[\Pc])=-\min \{t \in \ZZ_{>0} : \inter(t\Pc) \cap \ZZ^N \neq \emptyset  \}$.
We say that $R$ is level if the canonical module $\omega_R$ of $R$ is generated by elements of the same degree.
The notion of level rings was introduced by Stanley \cite{Sta77}. By virtue of Danilov \cite{Dan} and Stanley \cite{Sta},
we know that the Ehrhart ring $K[\Pc]$ of $\Pc$ is level (of $a$-invariant $-r$) if and only if $\Pc$ is level (of index $r$).

As an analogy of $2$-convex-normal polytopes, we define $2$-convex-level polytopes.
A lattice polytope $\Pc \subset \RR^N$ is called \textit{2-convex-level} if \[\inter(2\Pc)=\inter(\Pc)+(\Pc \cap \ZZ^N).\]
Like as $2$-convex-normal lattice polytopes, $2$-convex-level lattice polytopes are always level.
%Note that 2-convex-level lattice polytopes are always level (Proposition \ref{prop:2cl}).
\begin{Proposition}
	\label{prop:2cl}
	Let $\Pc \subset \RR^N$ be a 2-convex-level lattice polytope.
	Then $\Pc$ is level.
\end{Proposition}

\begin{proof}
Let $r = \min \{t \in \ZZ_{>0} : \inter(t\Pc) \cap \ZZ^N \neq \emptyset  \}$.
	Fix a positive integer $n \geq r+1$ and take an interior lattice point $\xb$ in $n \Pc$.
	Since for lattice polytopes $\Pc_1,\ldots,\Pc_m \subset \RR^N$, ${\rm int}(\Pc_1+\cdots+\Pc_m)={\rm int}(\Pc_1)+\cdots+{\rm int}(\Pc_m)$ (cf. \cite[Section 1]{Convex}) and since $\Pc$ is 2-convex-level, we obtain
	\[
	{\rm int}(n\Pc)={\rm int}(r\Pc)+\underbrace{(\Pc \cap \ZZ^N)+\cdots+(\Pc \cap \ZZ^N)}_{n-r}.
	\]
	Hence there exist $\xb_1 \in {\rm int}(r\Pc)$ and $\xb_2,\ldots,\xb_{n-r+1} \in \Pc \cap \ZZ^N$ such that $\xb=\xb_1+\xb_2+\cdots+\xb_{n-r+1}$.
	In particular, from $\xb \in \ZZ^N$, one has $\xb_1 \in {\rm int}(r\Pc) \cap \ZZ^N$.
	Therefore, since $\xb_2+\cdots+\xb_{n-r+1} \in (n-r)\Pc \cap \ZZ^N$, $\Pc$ is level of index $r$.
\end{proof}

\section{Relationships between Minkowski sums and Cayley sums}
\label{sec:relation}
In this section, we discuss relationships between Minkowski sums and Cayley sums.
In particular, we prove Theorem \ref{thm;relation}.
%First, we consider a relation between  points in Minkowski sums and that in Cayley sums.
The following lemma is known as the Cayley trick (cf. \cite[Section 9]{triangulation}).
\begin{Lemma}
	\label{decomp1}
	Let $\Pc_1,\ldots,\Pc_m \subset \RR^N$ be convex polytopes.
	Then for any nonnegative real numbers $a_1,\ldots,a_m$, we have
	$$(a_1+\cdots+a_m)(\Pc_1*\cdots*\Pc_m) \cap 	
	\left( \left\{\sum_{i=1}^{m}a_i \eb_i\right\} \times \RR^{N} \right)
	=\left\{\sum_{i=1}^{m}a_i \eb_i\right\} \times \sum_{i=1}^{m} a_i \Pc_i$$
	and
		$${\rm int}((a_1+\cdots+a_m)(\Pc_1*\cdots*\Pc_m)) \cap 	
	\left( \left\{\sum_{i=1}^{m}a_i \eb_i\right\} \times \RR^{N} \right)
	=\left\{\sum_{i=1}^{m}a_i \eb_i\right\} \times {\rm int} \left(\sum_{i=1}^{m} a_i \Pc_i \right).$$

\end{Lemma}

Now, we prove Theorem \ref{thm;relation}.
\begin{proof}[Proof of Theorem \ref{thm;relation}]
	(1) Suppose that $\Pc_1*\cdots*\Pc_m$ is IDP. Since $\Pc_1,\ldots,\Pc_m$ are faces of $\Pc_1*\cdots*\Pc_m$, each $\Pc_i$ is IDP.
	Fix nonnegative integers $a_1,\ldots,a_m$ and take $\xb \in (n\sum_{i=1}^m a_i\Pc_i) \cap \ZZ^N$ with a positive integer $n$.
	Then by Lemma \ref{decomp1}, we have 
	$$\left(n\sum_{i=1}^m a_i \eb_i,\xb\right) \in n(a_1+\cdots+a_m)(\Pc_1*\cdots*\Pc_m) \cap  \left(\left\{n\sum_{i=1}^m a_i \eb_i \right\} \times \RR^N \right).$$
	Since $\Pc_1*\cdots*\Pc_m$ is IDP, we can write\[
	\left(n\sum_{i=1}^m a_i \eb_i,\xb \right) =\sum_{k=1}^{n}\sum_{j=1}^{a_1+\cdots+a_m}\xb_{k,j},\] where each $\xb_{k,j} \in (\Pc_1*\cdots*\Pc_m) \cap \ZZ^{m+N}.$
	In particular, from 
	\[
	(\Pc_1*\cdots*\Pc_m) \cap \ZZ^{m+N}= \bigcup_{i=1}^m (\{\eb_i\} \times (\Pc_i \cap \ZZ^N)),
	\]
	 we may assume that for any $1 \leq k \leq n$, $1 \leq i \leq m$ and $1 \leq j \leq a_i$, $\xb_{k,a_1+\cdots+a_{i-1}+j} \in \{\eb_i\} \times (\Pc_i \cap \ZZ^N)$.
	Hence it follows from \[\yb_{k,i}=\sum_{j=1}^{a_i} \xb_{k,a_1+\cdots+a_{i-1}+j} \in \{a_i \eb_i\} \times (a_i \Pc_i \cap \ZZ^N)
	\]
	for each $i$
	that the tuple $(a_1\Pc_1,\ldots,a_m \Pc_m)$ is IDP.
	Moreover, we can write 
	\[\left(n\sum_{i=1}^m a_i \eb_i,\xb \right) = \yb_1+\cdots+\yb_{n},\] where each $\yb_k=\yb_{k,1}+\cdots+\yb_{k,m} \in  \{\sum_{i=1}^m a_i \eb_i\} \times (\sum_{i=1}^{m} a_i \Pc_i \cap \ZZ^{N})$.
	This implies that the Minkowski sum $\sum_{i=1}^{m} a_i \Pc_i$ is IDP.
	
	Conversely, suppose that each $\Pc_i$ is IDP and for any nonnegative integers $a_1,\ldots, a_m$, the tuple $(a_1 \Pc_1,\ldots, a_m \Pc_m)$ is IDP.
	Take $\xb \in n(\Pc_1*\cdots*\Pc_m) \cap \ZZ^{m+N}$ with some positive integer $n$.
	It then follows from Lemma \ref{decomp1} that there are nonnegative integers $a_1,\ldots,a_m$ with $a_1+\cdots+a_m=n$ and $\yb \in (\sum_{i=1}^{m} a_i \Pc_i)\cap \ZZ^N$ such that $(\sum_{i=1}^m a_i \eb_i, \yb)=\xb$. 
	Since the tuple $(a_1 \Pc_1,\ldots, a_m \Pc_m)$ is IDP, there are $\yb_1,\ldots,\yb_m \in \ZZ^N$ such that each $\yb_i \in a_i\Pc_i \cap \ZZ^N$ and $\yb=\yb_1+\cdots+\yb_m$. Moreover since each $\Pc_i$ is IDP, for any $1 \leq i \leq m$, there are $\yb^{(i)}_1,\ldots,\yb^{(i)}_{a_i} \in \Pc_i \cap \ZZ^N$ such that $\yb_i=\yb^{(i)}_1+\cdots+\yb^{(i)}_{a_i}$.
	Therefore, $\Pc_1*\cdots*\Pc_m$ is IDP since each $(\eb_i, \yb^{(i)}_j) \in \Pc_1*\cdots*\Pc_m \cap \ZZ^{m+N}$ and $\xb=\sum_{i=1}^{m}\sum_{j=1}^{a_i}(\eb_i, \yb^{(i)}_j)$.

	(2)
	Every interior lattice point of $m(\Pc_1*\cdots*\Pc_m)$ belongs to $\{\eb_1+\cdots+\eb_m\} \times \ZZ^{N}$.
	Hence by Lemma  \ref{decomp1}, we know that $\Pc_1+\cdots+\Pc_m$ has interior lattice points.
	Let $\ab \in \text{int}(n(P_1+\cdots+P_m)) \cap \ZZ^N$ with a positive integer $n$.
	Then by Lemma \ref{decomp1}, one has $$(n(\eb_{1}+\cdots+\eb_m),\ab) \in \text{int}(nm(\Pc_1*\cdots*\Pc_m)) \cap  (\{n(\eb_{1}+\cdots+\eb_m)\} \times \RR^{N}).$$
	Since $\Pc_1*\ldots*\Pc_m$ is level of index $m$,
	we can write $(n(\eb_{1}+\cdots+\eb_m),\ab)=\bb_1+\bb_2$, where $\bb_1 \in \text{int}(m(\Pc_1*\cdots*\Pc_m)) \cap \ZZ^{m+N}$ and $\bb_2 \in  (n-1)m(\Pc_1*\cdots*\Pc_m)\cap \ZZ^{m+N}$.
	Then  by Lemma \ref{decomp1}, $\bb_1 \in \{\eb_{1}+\cdots+\eb_m\} \times \textnormal{int}(\Pc_1+\cdots+\Pc_m)$
	and $\bb_2 \in \{(n-1)(\eb_{1}+\cdots+\eb_m)\} \times (n-1)(\Pc_1+\cdots+\Pc_m)$.
	Hence $\Pc_1+\cdots+\Pc_m$ is level of index $1$, as desired.
\end{proof}

From the following example we know that 
$(a_1 \Pc_1,\ldots,a_m \Pc_m)$ may not be IDP for some $a_1,\ldots,a_m$ even if each of $\Pc_1,\ldots,\Pc_m$ is IDP and the tuple $(\Pc_1,\ldots,\Pc_m)$ is IDP.

\begin{Example}\label{ex:cn-counter}
	{\rm 
	Let $\Pc, \Qc \subset \RR^6$ be the lattice polytopes with
	\[
	\Pc = {\rm conv} \{ \eb_1+\eb_2,\eb_3+\eb_4,\eb_5+\eb_6\} \mbox{ and } \Qc={\rm conv} \{ \eb_1+\eb_6,\eb_2+\eb_3,\eb_4+\eb_5\}.
	\]
	Then both $\Pc$ and $\Qc$ are IDP, and the pair $(\Pc,\Qc)$ is IDP. Since the lattice point $(1,1,1,1,2,2) \in (2\Pc+2\Qc) \cap \ZZ^6$ does not belong to $(2\Pc \cap \ZZ^5)+(2\Qc \cap \ZZ^5)$, $(2\Pc, 2\Qc)$ is not IDP. 
	Moreover, $\Pc + \Qc$ and $\Pc * \Qc$ are not IDP.
	}
\end{Example}

Now, we discuss applications of Theorem \ref{thm;relation} (1) to Oda's question.
%focus on the integer decomposition property of a pair of polytopes.
A lattice polytope $\Pc$ is called \textit{smooth} if the edge directions at every vertex of $\Pc$ form a lattice basis.
The following conjecture by Oda is well-known (\cite{Oda}).

\begin{Conjecture}[Oda Conjecture]
	\label{conj:oda}
	Let $\Pc, \Qc \subset \RR^N$ be lattice polytopes. If $\Pc$ is smooth and the normal fan of $\Qc$ coarsens that of $\Pc$, then $(\Pc, \Qc)$ is IDP.
\end{Conjecture}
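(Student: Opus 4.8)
This statement is the Oda Conjecture, a long-standing open problem, so instead of a complete argument I will describe the line of attack I would pursue and the precise point at which it --- like every known attack --- stalls. The first step is purely formal: for a pair the tuple-IDP condition has only one non-trivial instance, namely
\[
(\Pc+\Qc)\cap\ZZ^N=(\Pc\cap\ZZ^N)+(\Qc\cap\ZZ^N),
\]
the two singleton cases being tautological. By Theorem \ref{thm;relation}(1) it would even suffice to prove that the Cayley sum $\Pc*\Qc$ is IDP, since setting all $a_i$ equal to $1$ there shows $\Pc*\Qc$ can be IDP only if the tuple $(\Pc,\Qc)$ is. I would keep both formulations at hand: the Minkowski one is the sharpest target, while the Cayley one has the virtue of replacing a sum of polytopes by a single polytope.

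The second step is to translate the problem into toric geometry. Smoothness of $\Pc$ makes the associated projective toric variety $X=X_{\Pc}$ smooth, with $\Pc$ defining an ample divisor $D_{\Pc}$ and $H^0(X,\OO(D_{\Pc}))=\bigoplus_{\ab\in\Pc\cap\ZZ^N}\CC\cdot\chi^{\ab}$; the hypothesis that the normal fan of $\Qc$ coarsens that of $\Pc$ says precisely that $\Qc$ defines a base-point-free divisor $D_{\Qc}$ on the same $X$, with global sections spanned by $\Qc\cap\ZZ^N$. The displayed equality then becomes surjectivity of the multiplication map $H^0(\OO(D_{\Pc}))\otimes H^0(\OO(D_{\Qc}))\to H^0(\OO(D_{\Pc}+D_{\Qc}))$, and tensoring the syzygy sequence $0\to M_{D_{\Qc}}\to H^0(\OO(D_{\Qc}))\otimes\OO_X\to\OO(D_{\Qc})\to 0$ by $\OO(D_{\Pc})$ shows that this surjectivity would follow from the vanishing $H^1(X,M_{D_{\Qc}}\otimes\OO(D_{\Pc}))=0$. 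On a smooth toric variety every such cohomology group has an explicit combinatorial description via the torus-weight decomposition and the reduced cohomology of certain subcomplexes of the fan, and the plan is to feed the smoothness of $\Sigma_{\Pc}$ together with the fact that it refines $\Sigma_{\Qc}$ into that description, inducting on $\dim\Pc$ by relating $X$ to its toric boundary divisors (whose associated facets of $\Pc$ are again smooth and still carry a coarsening hypothesis), and bottoming out at the low-dimensional cases where the conjecture is already known.

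The combinatorial shadow of this plan is the following: given $\zb\in(\Pc+\Qc)\cap\ZZ^N$, one wants to use the unimodular cone of edge directions of $\Pc$ at the vertex whose normal cone contains the normal cone of the face of $\Qc$ selected by $\zb$ --- such a vertex exists exactly because the normal fan of $\Qc$ coarsens that of $\Pc$ --- in order to peel off a lattice point $\yb\in\Qc\cap\ZZ^N$ leaving $\zb-\yb\in\Pc\cap\ZZ^N$. The honest assessment is that this last step is the main obstacle, and that it is an open one: neither the vanishing $H^1(X,M_{D_{\Qc}}\otimes\OO(D_{\Pc}))=0$ nor the combinatorial peeling is known in general. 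A Mumford-regularity argument does yield the surjectivity once $D_{\Pc}$ is replaced by a sufficiently large multiple --- this is the mechanism behind Theorems \ref{orginal} and \ref{Higa} --- but pushing the multiple all the way down to $1$ using only the smoothness of $\Pc$ and the coarsening of fans appears to require a genuinely new idea, which is why Conjecture \ref{conj:oda} is still unresolved.
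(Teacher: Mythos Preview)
The paper contains no proof of this statement: it is explicitly labelled a \emph{Conjecture} and is presented as the well-known open Oda Conjecture, immediately followed by its reformulation (via Theorem~\ref{thm;relation}(1)) as the statement that $\Pc*\Qc$ is IDP. You correctly recognized this, and your proposal is not a proof but an honest discussion of why the problem is open and where the natural toric-geometric and combinatorial attacks stall. That is the appropriate response here.

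Your write-up in fact goes well beyond the paper, which says nothing about the kernel-bundle / $H^1$-vanishing approach or the ``peeling'' heuristic at a unimodular cone; the paper only records the Cayley-sum reformulation and the known two-dimensional case. One small technical remark: in your first paragraph you say that by Theorem~\ref{thm;relation}(1) ``it would even suffice to prove that the Cayley sum $\Pc*\Qc$ is IDP''. That theorem actually shows the \emph{equivalence}: $\Pc*\Qc$ is IDP iff each of $\Pc,\Qc$ is IDP and every tuple $(a\Pc,b\Qc)$ is IDP. So proving $\Pc*\Qc$ IDP is a priori \emph{stronger} than proving $(\Pc,\Qc)$ IDP (it also requires $\Pc$ and $\Qc$ individually IDP and all $(a\Pc,b\Qc)$ IDP), not merely sufficient in a helpful direction. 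Under the Oda hypotheses $\Pc$ is smooth and hence IDP, and the refinement condition is preserved under dilation, so in this particular setting the two formulations are indeed expected to be equivalent; but your phrasing slightly understates what the Cayley version demands.
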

In particular, Oda conjectured that every smooth polytope is IDP.
Let $\Pc \subset \RR^N$ be a smooth polytope and $\Qc \subset \RR^N$ an IDP polytope whose normal fan coarsens the normal fan of $\Pc$.
Then for any positive integers $a,b$, $a \Pc$ is smooth and the normal fan of $\Pc$ (resp. $\Qc$) coincide with that of $a\Pc$ (resp. $b\Qc$).
Hence if Conjecture \ref{conj:oda} holds, then $\Pc$ is IDP and for any nonnegative integers $a,b$, $(a \Pc, b \Qc)$ is IDP.
Therefore, from Theorem \ref{thm;relation} (1) Conjecture \ref{conj:oda} implies the following conjecture.
\begin{Conjecture}
		Let $\Pc, \Qc \subset \RR^N$ be lattice polytopes. If $\Pc$ is smooth, $\Qc$ is IDP, and the normal fan of $\Qc$ coarsens that of $\Pc$, then $\Pc*\Qc$ is IDP.
\end{Conjecture}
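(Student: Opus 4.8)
The plan is to derive this conjecture from the Oda Conjecture (Conjecture \ref{conj:oda}), since the conjecture is open and equivalent in strength to it; so I assume throughout that Conjecture \ref{conj:oda} holds for \emph{every} pair satisfying its hypotheses, and I call such a pair \emph{admissible}. Fix an admissible pair: a smooth lattice polytope $\Pc$ and a lattice polytope $\Qc$ whose normal fan coarsens that of $\Pc$. By Theorem \ref{thm;relation} (1) applied with $m=2$, the statement ``$\Pc*\Qc$ is IDP'' is equivalent to the conjunction of three conditions: that $\Pc$ is IDP, that $\Qc$ is IDP, and that the tuple $(a\Pc, b\Qc)$ is IDP for all nonnegative integers $a,b$. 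Two elementary geometric facts will be used repeatedly: dilation leaves the normal fan unchanged and preserves smoothness (the vertex cones of $a\Pc$ coincide with those of $\Pc$), and the normal fan of a Minkowski sum is the common refinement of the summands' normal fans, so that $\Pc + c\Qc$ is again smooth with normal fan equal to that of $\Pc$ whenever the normal fan of $\Qc$ coarsens that of $\Pc$.

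The tuple conditions and the IDP-ness of $\Pc$ reduce cleanly to instances of the Oda Conjecture. For the tuples: since $a\Pc$ is smooth and the normal fan of $b\Qc$ (which equals that of $\Qc$) coarsens that of $a\Pc$ (which equals that of $\Pc$), the pair $(a\Pc, b\Qc)$ is admissible when $a,b\geq 1$, and when $a$ or $b$ is zero the tuple is IDP trivially; hence $(a\Pc,b\Qc)$ is IDP for all $a,b\geq 0$. For $\Pc$, note that $\Pc$ is IDP precisely when the tuple $((n-1)\Pc, \Pc)$ is IDP for every $n\geq 1$, because $(n-1)\Pc + \Pc = n\Pc$; for $n\geq 2$ this pair is admissible, since $(n-1)\Pc$ is smooth and the two factors have the same normal fan, while $n=1$ is vacuous. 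Thus both of these follow from the assumed conjecture.

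The main obstacle is the IDP-ness of $\Qc$ itself. Here the direct approach fails, because $\Qc$ is not assumed smooth, so $((n-1)\Qc, \Qc)$ is \emph{not} an admissible pair and the conjecture cannot be invoked for it. My plan is instead to introduce the auxiliary polytope $\Pc + (n-1)\Qc$, which by the remarks above is smooth with normal fan equal to that of $\Pc$, so that the normal fan of $\Qc$ coarsens it. Applying the Oda Conjecture to the three admissible pairs $(\Pc + (n-1)\Qc,\, \Qc)$, $(\Pc,\, (n-1)\Qc)$ and $(\Pc,\, n\Qc)$ and combining the resulting identities gives
\[
(\Pc \cap \ZZ^N) + (n\Qc \cap \ZZ^N) = (\Pc \cap \ZZ^N) + ((n-1)\Qc \cap \ZZ^N) + (\Qc \cap \ZZ^N).
\]
The heart of the argument, and the step I expect to be genuinely hard, is to cancel the common summand $\Pc \cap \ZZ^N$ from both sides so as to conclude $n\Qc \cap \ZZ^N = ((n-1)\Qc \cap \ZZ^N) + (\Qc \cap \ZZ^N)$, i.e.\ that $\Qc$ is IDP. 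This cancellation is not formal, since Minkowski sums of finite lattice-point sets do not cancel in general; carrying it out will require exploiting the smoothness of $\Pc$ and the precise way its lattice points sit relative to the faces of $\Qc$. This is where the real content of the statement is concentrated, and it is the step on which a complete proof would stand or fall.
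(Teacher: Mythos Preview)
The paper offers no proof of this statement; it merely asserts, in one sentence, that the conjecture is equivalent to Oda's Conjecture~\ref{conj:oda} via Theorem~\ref{thm;relation}~(1). Your proposal goes considerably further by attempting to carry out that implication in detail, and you have correctly isolated the crux: Theorem~\ref{thm;relation}~(1) requires $\Qc$ itself to be IDP, and since $\Qc$ need not be smooth, no instance of Oda's conjecture applies to it directly. Your reductions for $\Pc$ and for the tuples $(a\Pc,b\Qc)$ are fine.

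The gap you flagged is genuine and cannot be closed; the conjecture as stated is in fact false, so the paper's asserted equivalence is incorrect. The reason is that \emph{every} lattice polytope $\Qc$ occurs as the second member of some admissible pair: by repeated toric star subdivisions one refines the normal fan of $\Qc$ to a smooth fan, and since star subdivisions correspond to blow-ups they preserve projectivity, so the refined fan is the normal fan of a smooth lattice polytope $\Pc$. Hence if the conjecture held, Theorem~\ref{thm;relation}~(1) would force every lattice polytope $\Qc$ whatsoever to be IDP. But non-IDP lattice polytopes exist---for instance the Reeve tetrahedra in $\RR^3$---so the conjecture fails outright. Consequently the cancellation step you were unable to complete is not a missing technicality but a real obstruction: under the stated hypotheses the identity $n\Qc\cap\ZZ^N=((n-1)\Qc\cap\ZZ^N)+(\Qc\cap\ZZ^N)$ simply need not hold, and no use of the smoothness of $\Pc$ can rescue it. Only the direction ``$\Pc*\Qc$ IDP $\Rightarrow$ $(\Pc,\Qc)$ IDP'' of the claimed equivalence is actually justified by Theorem~\ref{thm;relation}~(1).
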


Conjecture \ref{conj:oda} holds when $N=2$. Moreover, in this case, we do not need the smoothness assumption on $\Pc$.
In fact,
\begin{Theorem}[{\cite[Theorem 1.1]{HNP}}]
	Let $\Pc,\Qc \subset \RR^2$ be lattice polygons such that the normal fan of $\Qc$ coarsens that of $\Pc$.
	Then $(\Pc,\Qc)$ is IDP.
\end{Theorem}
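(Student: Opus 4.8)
The statement to prove is the theorem of Haase--Nill--Paffenholz \cite[Theorem 1.1]{HNP}: if $\Pc,\Qc \subset \RR^2$ are lattice polygons and the normal fan of $\Qc$ coarsens that of $\Pc$, then $(\Pc,\Qc)$ is IDP. Recall that by definition this means $(\Pc+\Qc)\cap\ZZ^2 = \Pc\cap\ZZ^2 + \Qc\cap\ZZ^2$, together with the (automatic, in dimension $2$) individual IDP-ness of $\Pc$ and $\Qc$; the latter holds because every lattice polygon is IDP. So the content is the single Minkowski-sum equality. The plan is to fix a lattice point $\wb \in (\Pc+\Qc)\cap\ZZ^2$ and produce a decomposition $\wb = \pb + \qb$ with $\pb \in \Pc\cap\ZZ^2$, $\qb \in \Qc\cap\ZZ^2$.

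First I would reduce to the case where $\Pc$ and $\Qc$ have the \emph{same} normal fan, by a subdivision/coarsening argument: since the normal fan of $\Qc$ coarsens that of $\Pc$, one can (at the cost of replacing $\Qc$ by a polygon with more edges, or conversely decomposing $\Qc$ into a Minkowski sum compatible with $\Pc$'s fan) arrange matters so that corresponding edges of $\Pc$ and $\Qc$ have parallel outer normals; the Minkowski sum equality can then be checked fan-cell by fan-cell. Concretely, for a lattice point $\wb$ on the boundary of $\Pc+\Qc$ one uses the well-known fact that $\partial(\Pc+\Qc)$ decomposes edgewise as $\partial\Pc + \partial\Qc$ according to slopes, so a boundary $\wb$ lies on an edge $E = E_\Pc + E_\Qc$ and one splits $\wb$ along that edge using that edges of lattice polygons are themselves IDP (a $1$-dimensional statement). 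The genuinely two-dimensional case is $\wb \in \inter(\Pc+\Qc)\cap\ZZ^2$.

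For an interior lattice point, the key step is a counting/pigeonhole argument via Pick's theorem. Triangulate or otherwise analyze the region, and consider the candidate set $S = \{\,\pb \in \Pc\cap\ZZ^2 : \wb - \pb \in \Qc\,\}$; one must show $S$ contains a point with $\wb-\pb \in \ZZ^2$, equivalently $S \cap \ZZ^2 \neq \emptyset$. The set $(\wb - \Qc)\cap\Pc$ is a lattice polygon (intersection of two lattice polygons whose fans are compatible, hence the intersection is again a lattice polygon — this uses the fan-compatibility crucially), and it is nonempty since $\wb \in \Pc+\Qc$; any lattice polygon of dimension $\geq 1$ contains a lattice point, and if it is $0$-dimensional one argues directly. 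The subtlety is ensuring $(\wb-\Qc)\cap\Pc$ actually \emph{has} a lattice point rather than being a lattice-point-free polygon — but a nonempty intersection of two lattice polygons in the plane with compatible normal fans has all its vertices at lattice points (each vertex is either a vertex of $\Pc$, a vertex of $\wb-\Qc$, or the transverse intersection of an edge of $\Pc$ with an edge of $\wb - \Qc$ having a different slope, and in dimension $2$ with fan compatibility such crossings are forced to be lattice points by a determinant/unimodularity argument), so it contains a lattice point.

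The main obstacle is precisely the last point: showing that an edge of $\Pc$ and an edge of $\wb - \Qc$, when they cross transversally, cross at a lattice point. This is where the normal-fan coarsening hypothesis does all the work — it constrains the possible slopes of edges so that, after the reduction to a common refinement of the two fans, every relevant edge-edge crossing is unimodular. I would handle this by a careful case analysis on the local fan structure at the crossing, invoking that the primitive edge directions of $\Pc$ at a vertex form (after the fan is suitably refined to agree with $\Qc$'s) a unimodular or near-unimodular configuration; the two-dimensionality is essential and is exactly why the higher-dimensional analogue (Oda's conjecture) remains open.
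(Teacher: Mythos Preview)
The paper does not prove this statement at all: it is quoted verbatim from \cite[Theorem 1.1]{HNP} and used as a black box to deduce the subsequent corollary about $\Pc*\Qc$. There is therefore no ``paper's own proof'' to compare against.

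That said, your proposed argument has a genuine gap at its central step. You claim that $(\wb-\Qc)\cap\Pc$ is a lattice polygon because every transverse crossing of an edge of $\Pc$ with an edge of $\wb-\Qc$ is a lattice point, and you attribute this to some unimodularity forced by the normal-fan hypothesis. This is false. Take
\[
\Pc=\conv\{(0,0),(2,0),(3,2),(3,4),(1,4),(0,2)\},\qquad \Qc=\conv\{(0,0),(1,2)\}.
\]
The edge directions of $\Qc$ are $\pm(1,2)$, which occur among the edge directions of $\Pc$, and one checks that the normal fan of $\Qc$ coarsens that of $\Pc$. For $\wb=(1,1)\in(\Pc+\Qc)\cap\ZZ^2$ the set $\wb-\Qc$ is the segment from $(1,1)$ to $(0,-1)$, and $(\wb-\Qc)\cap\Pc$ is the segment from $(1,1)$ to $(1/2,0)$; the endpoint $(1/2,0)$ is the transverse intersection of the edge of $\wb-\Qc$ (direction $(1,2)$) with the bottom edge of $\Pc$ (direction $(1,0)$), and it is not a lattice point. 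The fan hypothesis constrains which \emph{directions} can meet, but it places no unimodularity constraint on pairs of non-parallel directions, so your ``determinant/unimodularity argument'' cannot exist in the form you suggest.

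Your target statement---that $(\wb-\Qc)\cap\Pc$ always contains a lattice point---is of course equivalent to the theorem itself, so it is true; but the reason is not that this intersection is a lattice polygon. The actual proof in \cite{HNP} proceeds differently, and if you want to supply a self-contained argument you will need a different mechanism (for instance an inductive or sweeping argument along one edge direction, or the mixed-subdivision approach via the Cayley trick) rather than the lattice-polygon claim.
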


Hence we obtain the following.
\begin{Corollary}
	Let $\Pc,\Qc \subset \RR^2$ be lattice polygons such that the normal fan of $\Qc$ coarsens that of $\Pc$.
	Then $\Pc*\Qc$ is IDP.
\end{Corollary}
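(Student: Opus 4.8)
The plan is to combine the two results that immediately precede this corollary. By \cite[Theorem 1.1]{HNP}, since $\Pc,\Qc \subset \RR^2$ are lattice polygons with the normal fan of $\Qc$ coarsening that of $\Pc$, the pair $(\Pc,\Qc)$ is IDP in the sense defined earlier, i.e.\ both $\Pc \cap \ZZ^2$, $\Qc \cap \ZZ^2$ and $(\Pc+\Qc) \cap \ZZ^2 = \Pc\cap\ZZ^2 + \Qc\cap\ZZ^2$. Now I want to invoke Theorem \ref{thm;relation} (1) with $m=2$: it says $\Pc*\Qc$ is IDP if and only if each of $\Pc,\Qc$ is IDP \emph{and} for all nonnegative integers $a_1,a_2$ the tuple $(a_1\Pc, a_2\Qc)$ is IDP.

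So the first step is to check that each $\Pc_i$ is IDP: this is automatic, since every lattice polygon is IDP (stated in the Introduction). The second and main step is to verify that $(a_1\Pc, a_2\Qc)$ is IDP for every pair of nonnegative integers $a_1,a_2$. The key observation is that dilation does not destroy the coarsening hypothesis on normal fans: the normal fan of $a\Pc$ equals the normal fan of $\Pc$ for any positive integer $a$, and the normal fan of $a_2\Qc$ still coarsens the normal fan of $a_1\Pc$. (The degenerate cases $a_1=0$ or $a_2=0$, where one polygon is a point, make the tuple trivially IDP.) Hence, applying \cite[Theorem 1.1]{HNP} again to the polygons $a_1\Pc$ and $a_2\Qc$, we conclude that $(a_1\Pc, a_2\Qc)$ is IDP. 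Both conditions in Theorem \ref{thm;relation} (1) are therefore satisfied, so $\Pc*\Qc$ is IDP.

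The only mild subtlety — the step I would flag as needing a line of care rather than being genuinely hard — is confirming that the normal fan is dilation invariant and that "the normal fan of $\Qc$ coarsens that of $\Pc$" is preserved under independently dilating the two polygons; this is a standard fact about normal fans but is worth stating explicitly so the reader sees why \cite{HNP} applies to the dilated pair. Everything else is a direct citation of the quoted theorems, so the corollary follows without further work.
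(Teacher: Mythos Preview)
Your proposal is correct and matches the paper's approach: the paper simply writes ``Hence we obtain the following'' after stating \cite[Theorem 1.1]{HNP}, leaving implicit exactly the argument you spell out, namely that dilation preserves normal fans so Theorem~\ref{thm;relation}(1) applies via the HNP result for every $(a_1\Pc,a_2\Qc)$, together with the fact that lattice polygons are IDP. Your explicit handling of the degenerate cases $a_1=0$ or $a_2=0$ and the remark on dilation invariance of normal fans are appropriate clarifications of what the paper takes for granted.
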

Recently, this result has been generalized by Codenotti and Santos \cite{CS}.

Next, we discuss the converse of Theorem \ref{thm;relation} (2).
Recall the following result on the Gorenstein property of Minkowski sums and Cayley sums.
\begin{Theorem}[{\cite[Theorem 2.6]{BN}}]
	Let $\Pc_1,\ldots,\Pc_m \subset \RR^N$ be lattice polytopes with $\dim(\Pc_1+\cdots+\Pc_m)=N$.
	Then $\Pc_1*\cdots*\Pc_m$ is Gorenstein of index $m$ if and only if  $\Pc_1+\cdots+\Pc_m$ is Gorenstein of index $1$.
\end{Theorem}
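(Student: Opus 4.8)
The plan is to translate both conditions into statements about interior lattice points of the dilates of $Q := \Pc_1*\cdots*\Pc_m$, where the slicing Lemmas \ref{decomp1} and \ref{decomp2} do the work, then reduce the Gorenstein property on each side to reflexivity of a lattice translate of the relevant dilate, and finally settle the equivalence by a facet computation for $Q$. Write $\Delta := \Pc_1+\cdots+\Pc_m$; by the hypothesis $\dim\Delta = N$ we have $\dim Q = (m-1)+N$. First I would record that, for every $t\in\ZZ_{>0}$,
\[
\inter(tQ)\cap\ZZ^{m+N} = \bigsqcup_{\substack{\bb=(b_1,\dots,b_m)\in\ZZ_{>0}^m\\ b_1+\cdots+b_m=t}} \{\bb\}\times\Bigl(\inter\bigl(\textstyle\sum_{i=1}^m b_i\Pc_i\bigr)\cap\ZZ^N\Bigr),
\]
and likewise $tQ\cap\ZZ^{m+N} = \bigsqcup_{\bb\in\ZZ_{\ge 0}^m,\ \sum b_i = t}\{\bb\}\times\bigl(\bigl(\sum_{i=1}^m b_i\Pc_i\bigr)\cap\ZZ^N\bigr)$. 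Both are immediate from Lemmas \ref{decomp1} and \ref{decomp2}: a lattice point of $tQ$ has first $m$ coordinates a vector $\bb\in\ZZ_{\ge 0}^m$ with $\sum b_i = t$, its remaining coordinates lie in $\sum_i b_i\Pc_i$, and it is interior precisely when every $b_i>0$ (otherwise it lies on the face $\{u_i=0\}$ of $Q$) and its remaining coordinates lie in $\inter(\sum_i b_i\Pc_i)$.

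Two consequences follow at once. Since there is no $\bb\in\ZZ_{>0}^m$ with $\sum b_i<m$, we get $\inter(tQ)\cap\ZZ^{m+N}=\emptyset$ for $t<m$, while $\inter(mQ)\cap\ZZ^{m+N}=\{(1,\dots,1)\}\times(\inter(\Delta)\cap\ZZ^N)$. Hence $m=\min\{t\in\ZZ_{>0}:\inter(tQ)\cap\ZZ^{m+N}\neq\emptyset\}$ if and only if $\Delta$ has index $1$, and in that case $|\inter(mQ)\cap\ZZ^{m+N}|=|\inter(\Delta)\cap\ZZ^N|$, so one is a singleton if and only if the other is. Thus ``the minimal dilate has a unique interior lattice point'' holds for $Q$ at index $m$ exactly when it holds for $\Delta$ at index $1$; write $\wb$ for the unique interior lattice point of $\Delta$ and $\wb_Q:=((1,\dots,1),\wb)$ for that of $mQ$. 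By the standard dictionary between the Gorenstein property of the Ehrhart ring and reflexivity of a lattice translate of the relevant dilate (which, with the minimality just established, also pins down the index), it now suffices to prove that $mQ-\wb_Q$ is reflexive if and only if $\Delta-\wb$ is; translating so that $\wb=0$, this reads: $mQ-((1,\dots,1),0)$ is reflexive if and only if $\Delta$ is.

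For this I would use the description of the facets of $Q$. Up to the codimension subtleties mentioned below, they come in two families: for each $j$, the inequality $u_j\ge 0$ defines the facet $F_j=\Pc_1*\cdots\widehat{\Pc_j}\cdots*\Pc_m$; and for each primitive facet normal $\db$ of $\Delta$, the inequality $\langle\db,\xb\rangle\le\sum_{i=1}^m h_{\Pc_i}(\db)\,u_i$ defines the facet $\Pc_1^{\db}*\cdots*\Pc_m^{\db}$ (where $\Pc_i^{\db}$ is the face of $\Pc_i$ in direction $\db$ and $h_{\Pc_i}$ its support function), using that the normal fan of $\Delta$ is the common refinement of the normal fans of the $\Pc_i$ and that $\sum_i h_{\Pc_i}=h_\Delta$. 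Now pass to $mQ$, which lies in $\{\sum_i u_i=m\}$, and shift by $\wb_Q=((1,\dots,1),0)$; the shifted polytope lies in $\{\sum_i u_i=0\}$ with its rank-$(m-1+N)$ lattice $\{\vb\in\ZZ^{m+N}:\sum_{i\le m}v_i=0\}$. The functional $(\vb,\xb)\mapsto v_j$ cutting out (the shift of) $F_j$ is primitive on this lattice and takes the value $-1$ at the origin and $0$ on the facet, so $F_j$ is automatically at lattice distance $1$ from $\wb_Q$; and the functional $(\ub,\xb)\mapsto\langle\db,\xb\rangle-\sum_i h_{\Pc_i}(\db)u_i$ cutting out the second-type facet is primitive (as $\db$ is) and takes the value $-h_\Delta(\db)$ at the origin and $0$ on the facet, so that facet is at lattice distance $1$ from $\wb_Q$ if and only if $h_\Delta(\db)=1$. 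As every facet of $\Delta$ occurs, $mQ-\wb_Q$ is reflexive if and only if $h_\Delta(\db)=1$ for every primitive facet normal $\db$ of $\Delta$, i.e. if and only if $\Delta$ is reflexive. With the previous paragraph this proves the theorem. (For the ``only if'' part of the level statement one may instead invoke Theorem \ref{thm;relation}(2).)

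The main obstacle I anticipate is making the facet description of $Q$ rigorous: one must check that the two families above exhaust the facets and that the displayed inequalities are irredundant, and above all handle the degenerate case in which some $\Pc_i$ is not full-dimensional --- there $u_j\ge 0$ may fail to be facet-defining, and the passage from facet normals of $\Delta$ to facets of $Q$ has to be organised purely through the support functions $h_{\Pc_i}$ (via $\sum_i h_{\Pc_i}=h_\Delta$ and its additivity under taking $\db$-faces) rather than through rays of normal fans. Everything else --- the slicing lemmas, the reflexivity criterion ``every facet at lattice distance one'', and the Gorenstein/reflexive-translate dictionary --- is standard.
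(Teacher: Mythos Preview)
The paper does not give its own proof of this statement: it is quoted verbatim from Batyrev--Nill \cite[Th.~2.6]{BN} and used only as background for the level--Gorenstein comparison. So there is nothing in the paper to compare your argument against.

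On its own merits your plan is sound. The slicing via Lemmas~\ref{decomp1} and~\ref{decomp2} correctly gives the index-$m$/index-$1$ match and the equality $|\inter(mQ)\cap\ZZ^{m+N}|=|\inter(\Delta)\cap\ZZ^N|$, and reducing Gorenstein to reflexivity of the translated dilate is the standard move. In the facet computation your conclusions are right, though a couple of signs are swapped (after the shift the functional $v_j$ vanishes at the origin and equals $-1$ on the facet, and similarly the second functional vanishes at the origin and equals $h_\Delta(\db)$ on the facet); this does not affect the lattice-distance count. The obstacle you flag is genuine and is the only real work left: one refinement is that $\{u_j=0\}$ can fail to be facet-defining precisely when $\sum_{i\neq j}\Pc_i$ is not full-dimensional in $\RR^N$ (not merely when some $\Pc_i$ is lower-dimensional), but under the hypothesis $\dim\Delta=N$ any such degeneracy is absorbed into the second family of inequalities, so the argument goes through once the facet list is established carefully.
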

Let us note that Gorenstein polytopes are level.
However, the converse of Theorem \ref{thm;relation} (2) does not hold in general.
\begin{Example}
	\label{counter}
	{\em
		Let $\Pc_1$ be the line segment from $(1,0)$ to $(0,1)$ and 
		$\Pc_2$ the line segment from $(1,1)$ to $(-h, -nh)$ with positive integers $h$ and $n$.
		Then since the dimension of $\Pc_1+\Pc_2$ is $2$, $\Pc_1+\Pc_2$ is level of index $1$.
		However, from \cite[Theorem 4.5]{HY}, we know that $\Pc_1*\Pc_2$ is not level.
		Hence the converse of Theorem \ref{thm;relation} (2) does not hold in general.
	}
\end{Example}

\section{Minkowski sums and Cayley sums of $2$-convex-normal lattice polytopes}
In this section, we will give an answer to Question \ref{ques} (1).
First, we discuss the integer decomposition property for Minkowski sums and Cayley sums of $2$-convex-normal lattice polytopes.
\begin{Theorem}
	\label{thm:2cn}
	Let $\Pc_1,\ldots,\Pc_m \subset \RR^{N}$ be $2$-convex-normal lattice polytopes. Then we obtain the following:
	\begin{enumerate}
		\item[{\rm (1)}] The Minkowski sum $\Pc_1+\cdots+\Pc_m$ is IDP;
		\item[{\rm (2)}] The Cayley sum $\Pc_1*\cdots*\Pc_m$ is IDP if (and only if) the tuple $(\Pc_1,\ldots,\Pc_m)$ is IDP.
	\end{enumerate}
\end{Theorem}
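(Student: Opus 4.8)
The plan is to deduce both assertions from a structural description of the dilates of a $2$-convex-normal polytope. First I would prove, by induction on $n$, that a $2$-convex-normal lattice polytope $\Pc\subset\RR^N$ satisfies, for every integer $n\geq 1$,
\[
n\Pc \;=\; \underbrace{(\Pc\cap\ZZ^N)+\cdots+(\Pc\cap\ZZ^N)}_{n-1}\;+\;\Pc ,
\]
where the underbraced term denotes the set of all sums of $n-1$ lattice points of $\Pc$ (an empty sum, hence the origin, when $n=1$). The cases $n=1$ and $n=2$ are respectively trivial and the definition of $2$-convex-normality, and the inductive step follows from $(n+1)\Pc = 2\Pc+(n-1)\Pc = (\Pc\cap\ZZ^N)+n\Pc$ together with the inductive hypothesis applied to $n\Pc$. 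This identity also recovers \cite[Corollary~5]{HH}: if $\xb\in n\Pc\cap\ZZ^N$ then $\xb=\zb+\yb$ with $\zb$ a sum of $n-1$ lattice points of $\Pc$ and $\yb\in\Pc$, whence $\yb=\xb-\zb\in\Pc\cap\ZZ^N$, so $\Pc$ is IDP. Both the displayed identity and this consequence are used below.

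For part (1), set $\Qc=\Pc_1+\cdots+\Pc_m$. Applying $2$-convex-normality to each summand,
\[
2\Qc \;=\; \sum_{i=1}^m 2\Pc_i \;=\; \sum_{i=1}^m\bigl(\Pc_i+(\Pc_i\cap\ZZ^N)\bigr) \;=\; \Qc+\sum_{i=1}^m(\Pc_i\cap\ZZ^N) .
\]
Since $\sum_{i=1}^m(\Pc_i\cap\ZZ^N)\subseteq\Qc\cap\ZZ^N$, the inclusions $2\Qc\subseteq\Qc+(\Qc\cap\ZZ^N)\subseteq 2\Qc$ force $2\Qc=\Qc+(\Qc\cap\ZZ^N)$; thus $\Qc$ is itself $2$-convex-normal and hence IDP.

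For part (2), the ``only if'' direction is immediate from Theorem \ref{thm;relation}(1) with $a_1=\cdots=a_m=1$. For ``if'', assume $(\Pc_1,\ldots,\Pc_m)$ is IDP; by Theorem \ref{thm;relation}(1) it suffices to check that each $\Pc_i$ is IDP (true, by $2$-convex-normality) and that $(a_1\Pc_1,\ldots,a_m\Pc_m)$ is IDP for all nonnegative integers $a_1,\ldots,a_m$. Fix such integers and a nonempty $I\subseteq[m]$; deleting the indices with $a_i=0$ changes neither side of the equality to be proved, so assume $a_i\geq 1$ for $i\in I$. Only the inclusion $\bigl(\sum_{i\in I}a_i\Pc_i\bigr)\cap\ZZ^N\subseteq\sum_{i\in I}(a_i\Pc_i\cap\ZZ^N)$ needs proof. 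Given $\xb$ in the left-hand side, the displayed identity gives $\sum_{i\in I}a_i\Pc_i=\sum_{i\in I}(a_i-1)(\Pc_i\cap\ZZ^N)+\sum_{i\in I}\Pc_i$, so $\xb=\sum_{i\in I}\zb_i+\yb$ with each $\zb_i$ a sum of $a_i-1$ lattice points of $\Pc_i$ and $\yb\in\sum_{i\in I}\Pc_i$; then $\yb=\xb-\sum_{i\in I}\zb_i\in\ZZ^N$, so $\yb\in\bigl(\sum_{i\in I}\Pc_i\bigr)\cap\ZZ^N=\sum_{i\in I}(\Pc_i\cap\ZZ^N)$ by the hypothesis on the tuple, applied to $I$. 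Writing $\yb=\sum_{i\in I}\yb_i$ with $\yb_i\in\Pc_i\cap\ZZ^N$ yields $\xb=\sum_{i\in I}(\zb_i+\yb_i)$, where $\zb_i+\yb_i$ is a sum of $a_i$ lattice points of $\Pc_i$ and so lies in $a_i\Pc_i\cap\ZZ^N$; hence $\xb\in\sum_{i\in I}(a_i\Pc_i\cap\ZZ^N)$.

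I do not anticipate a genuinely hard step: the content is the iterated identity of the first paragraph, after which the rest is Minkowski-sum bookkeeping. What most needs care is tracking which hypothesis is invoked where — $2$-convex-normality of the individual $\Pc_i$ for the structural identity and for the IDP-ness of each factor, and the tuple hypothesis only to split the ``real'' remainder $\yb$ — along with the harmless but necessary treatment of vanishing dilation factors and of singleton subtuples.
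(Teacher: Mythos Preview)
Your proof is correct and rests on the same computational core as the paper's --- iterate $2$-convex-normality to write $n\Pc=(n-1)(\Pc\cap\ZZ^N)+\Pc$, peel off lattice summands, and observe that the one ``real'' remainder is forced to be integral --- but your packaging differs in two noteworthy ways. For (1), the paper takes $\xb\in n\Qc\cap\ZZ^N$ and decomposes it directly into $n$ lattice points of $\Qc$, whereas you first prove the stronger fact that $\Qc$ is itself $2$-convex-normal and then invoke \cite[Corollary~5]{HH}; your route gives a cleaner statement for free. For (2), the paper works inside the Cayley sum: it takes $\ab\in n(\Pc_1*\cdots*\Pc_m)\cap\ZZ^{m+N}$, reads off the slice parameters $t_i$, uses $2$-convex-normality to strip $n-|I|$ lattice points, and applies the tuple hypothesis to the remaining piece $(\sum_{i\in I}\eb_i,\cb)$. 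You instead invoke the full equivalence of Theorem~\ref{thm;relation}(1) and verify the Minkowski-side condition that every $(a_1\Pc_1,\ldots,a_m\Pc_m)$ is IDP; the underlying decomposition is the same, but your argument stays entirely in $\RR^N$ and never touches Lemma~\ref{decomp1} or the Cayley coordinates. Either route works; yours is arguably more modular, while the paper's is a touch more direct.
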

\begin{proof}
	(1) Given a lattice point  $\xb \in n(\Pc_1+\cdots+\Pc_m)$ with a positive integer $n$,
	there exist $m$ points $\xb_1,\ldots,\xb_m$ such that for each $i$, $\xb_i \in n\Pc_i$ and $\xb=\xb_1+\cdots+\xb_m$.
	Moreover, since each $\Pc_i$ is $2$-convex-normal, for each $i$, we can write
	$\xb_i=\yb_{i1}+\cdots+\yb_{in}$,
	where $\yb_{i1} \in \Pc_i$ and for any $2 \leq j \leq n$, $\yb_{ij} \in \Pc_i \cap \ZZ^N$.
	For $1 \leq j \leq n$, set $\zb_j=\yb_{1j}+\cdots+\yb_{mj}$.
	Then $\zb_2,\ldots,\zb_n$ are lattice points in $\Pc_1+\cdots+\Pc_m$.
	Moreover since $\xb$ is a lattice point,  $\zb_1$ must be a lattice point in $\Pc_1+\cdots+\Pc_m$.
	Hence $\Pc_1+\cdots+\Pc_m$ is IDP.
	
	(2) Every $2$-convex normal lattice polytope is IDP. Hence thanks to Theorem \ref{thm;relation} (1), it suffices to show that if $(\Pc_1,\ldots,\Pc_m)$ is IDP,
	then for any nonnegative integers $a_1,\ldots,a_m$, $(a_1\Pc_1,\ldots,a_m\Pc_m)$ is IDP.
	%In order to prove this, we use the induction on $m$.
	%When $m=1$, this is clear.
	%Assume that $m \geq 2$.
	Let $\xb \in (a_1\Pc_1+\cdots+a_m\Pc_m) \cap \ZZ^{N}$.
	Then we can write 
	$$\xb=\xb_1+\cdots+\xb_m,$$
	where $\xb_i \in a_i \Pc_i$.
	Since each $\Pc_i$ is $2$-convex-normal, $\xb_i$ can be written as
	$$\xb_i=\yb^{(i)}+\yb^{(i)}_1+\cdots+\yb^{(i)}_{a_i-1},$$
	where $\yb^{(i)} \in \Pc_i$ and $\yb^{(i)}_1,\ldots,\yb^{(i)}_{a_i-1} \in \Pc_i \cap \ZZ^N$. 
	Then $\yb^{(1)}+\cdots+\yb^{(m)}$ is a lattice point since both $\xb$ and $\sum_{i=1}^m\sum_{j=1}^{a_i-1} \yb_{j}^{(i)}$ are lattice points.
	In particular, $\yb^{(1)}+\cdots+\yb^{(m)} \in (\Pc_1+\cdots+\Pc_m) \cap \ZZ^N$.
	Since $(\Pc_1,\ldots,\Pc_m)$ is IDP, without loss of generality, 
	we may assume that for each $i$$, \yb^{(i)} \in \Pc_i \cap \ZZ^N$.
	Hence it follows that $\xb_i \in a_i \Pc_i \cap \ZZ^N$. Therefore, $(a_1 \Pc_1, \ldots, a_m \Pc_m)$ is IDP. 
	\end{proof}

Next, we see when a dilated polytope is $2$-convex-normal.
\begin{Lemma}
	\label{lem:2cn}
	Let $\Pc \subset \RR^N$ be a lattice polytope.
	Then for any integer $n \geq \dim(\Pc)$, 
	$n \Pc$ is $2$-convex-normal.
\end{Lemma}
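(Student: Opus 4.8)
The plan is to prove the stronger-looking but equivalent statement that $2(n\Pc) = n\Pc \cap \ZZ^N + n\Pc$ whenever $n \geq \dim(\Pc)=:d$; that is, every lattice point $\zb \in 2n\Pc$ can be written as $\ub + \wb$ with $\ub \in n\Pc \cap \ZZ^N$ and $\wb \in n\Pc$. Fix such a $\zb$. First I would triangulate $\Pc$ into lattice simplices (for instance a unimodular triangulation on a finer lattice is not needed — any lattice triangulation works) and, by Carath\'eodory applied inside $2n\Pc$, reduce to the case where $\zb$ lies in $2n\Sigma$ for a single lattice $d$-simplex $\Sigma = \conv(\vb_0,\dots,\vb_d) \subseteq \Pc$. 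So write $\zb = \sum_{i=0}^d \lambda_i\vb_i$ with $\lambda_i \geq 0$ and $\sum_i \lambda_i = 2n$.

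Next, the key combinatorial step: I want to split the weight vector $(\lambda_0,\dots,\lambda_d)$ as $(\mu_0,\dots,\mu_d) + (\nu_0,\dots,\nu_d)$ where $\mu_i,\nu_i \geq 0$, $\sum_i \mu_i = \sum_i \nu_i = n$, the $\mu_i$ are \emph{integers}, and $\ub := \sum_i \mu_i \vb_i$ is a lattice point. The point is that among the $d+1$ coordinates the fractional parts $\{\lambda_i\}$ sum to an integer that is at most $d+1$; since $n \geq d$, one can greedily round: choose integers $m_i$ with $0 \leq m_i \leq \lfloor \lambda_i \rfloor$ (or $m_i \le \lceil \lambda_i\rceil$) so that $\sum m_i = n$ — this is possible precisely because $\sum_i \lfloor\lambda_i\rfloor \geq 2n - (d+1) \geq n - 1$ is a touch too weak, so instead I would argue: since $\sum_i \lambda_i = 2n$ and there are $d+1 \le n+1$ terms, at least one $\lambda_i \ge 2n/(d+1) \ge 1$; peel off an integer amount from the largest coordinates until the removed total is exactly $n$, which is feasible because at each stage the remaining total is $\ge n \ge$ the number of remaining coordinates minus one is not quite it either. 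The clean formulation is: the set of integer points of the simplex $\{(m_i) : 0\le m_i, \sum m_i = n\}$ and the correspondingly translated copy must both meet the box $\prod_i [\max(0,\lambda_i - n),\ \lambda_i]$, and nonemptiness of that intersection follows from $n \ge d$ by a standard lattice-point-in-a-box-slice argument (the hyperplane $\sum m_i = n$ meets the unit-width "staircase" region because its total width in each coordinate direction is $\ge 1$ once we have enough coordinates). Then $\ub = \sum_i \mu_i\vb_i$ is automatically a lattice point as an integer combination of vertices of $\Sigma$, it lies in $n\Sigma \subseteq n\Pc$, and $\wb = \zb - \ub = \sum_i \nu_i \vb_i \in n\Sigma \subseteq n\Pc$, completing the decomposition.

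The main obstacle is making the rounding step precise so that \emph{both} halves land in $n\Pc$ simultaneously, i.e. ensuring $0 \le \mu_i \le \lambda_i$ for every $i$ (so that $\nu_i = \lambda_i - \mu_i \ge 0$) while keeping $\sum_i \mu_i = n$ exactly and $\mu_i \in \ZZ$; this is exactly where the hypothesis $n \ge d$ (equivalently $d+1 \le n+1$) is used, and it is a small but genuine lemma about choosing an integer point of the simplex $n\Delta_d$ inside a prescribed axis-parallel box whose $i$-th side is $[\max(0,\lambda_i-n),\lambda_i]$ — each side has length $\min(n,\lambda_i) - \max(0,\lambda_i-n) \ge 0$, and the box has enough total "room" ($\sum_i$ of the available integer slack is $\ge d \ge$ what must be distributed) to hit the hyperplane $\sum m_i = n$. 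I would isolate this as a one-line combinatorial claim (rounding a real vector with prescribed sum to an integer vector with the same sum, changing each coordinate by less than $1$ and respecting the upper bounds) and verify it by the obvious inductive/greedy argument. Once that claim is in hand the rest is bookkeeping. I would also remark that the bound $n \ge \dim(\Pc)$ is sharp and recovers the spirit of Theorem~\ref{orginal}(1).
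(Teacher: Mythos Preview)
Your approach is the same as the paper's---Carath\'eodory reduces to a simplex $\conv(\vb_0,\dots,\vb_d)$, and then one peels $n$ integer vertex-copies off the barycentric expression $\zb=\sum_i \lambda_i\vb_i$, $\sum_i\lambda_i=2n$---but you talk yourself out of the one-line finish because of an off-by-one slip. You write that the fractional parts $\{\lambda_i\}$ ``sum to an integer that is at most $d+1$''; in fact each $\{\lambda_i\}<1$ and there are $d+1$ of them, so the sum is strictly less than $d+1$, and since it is an integer it is at most $d$. Hence
\[
\sum_{i=0}^d \lfloor\lambda_i\rfloor \;=\; 2n - \sum_{i=0}^d \{\lambda_i\} \;\ge\; 2n-d \;\ge\; n,
\]
using $n\ge d$. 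So you can simply pick integers $0\le \mu_i\le \lfloor\lambda_i\rfloor$ with $\sum_i\mu_i=n$, set $\ub=\sum_i\mu_i\vb_i\in n\Pc\cap\ZZ^N$ and $\wb=\zb-\ub\in n\Pc$, and you are done. This is exactly the paper's proof; there is no need for the box--slice or greedy-rounding detours, and indeed your ``one-line combinatorial claim'' (round a real vector to an integer one with the same sum, each coordinate moving by $<1$, while respecting the upper bounds $\mu_i\le\lambda_i$) is not obviously true as stated when some $\lambda_i<1$.

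Two small side remarks: first, $2$-convex-normality asks that \emph{every} point of $2(n\Pc)$ decompose this way, not just lattice points, so drop the word ``lattice'' in your first sentence (the argument is unchanged). Second, no triangulation is needed---Carath\'eodory applied directly to $2n\Pc$ already hands you $d+1$ affinely independent vertices of $\Pc$.
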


\begin{proof}
	Set $d=\dim(\Pc)$.
	When $d=0$, this is clear.
	We assume that $d \geq 1$.
	Let $\ab \in 2n\Pc$.
	By Carath\'{e}odory's Theorem (cf. {\cite[Corollary 7.1j]{integer}}), there exist $d+1$ affinely independent vertices $\vb_0,\ldots,\vb_d$ of $\Pc$ such that $\ab=\sum_{i=0}^{d}\lambda_i\vb_i$, where $\lambda_i \geq 0$ and $\sum_{i=0}^{d}\lambda_i=2n$.
	%Since $2n \geq d+1$, there exists an index $j$ such that $\lambda_j \geq 1$.
	%Hence $\ab=(\sum_{i=0}^{d}\lambda_i\vb_i-\vb_j)+\vb_j$ and $(\sum_{i=0}^{d}\lambda_i\vb_i-\vb_j) \in (2n-1)\Pc$.
	Since $\sum_{i=0}^d (\lambda_i- \lfloor \lambda_i \rfloor) < d+1 \leq n+1$, we obtain $\sum_{i=0}^d \lfloor \lambda_i \rfloor > n-1$. Hence $\sum_{i=0}^d\lfloor \lambda_i \rfloor \geq n$.
	 Therefore, there exist nonnegative integers $\mu_0,\ldots,\mu_d$ such that $\lambda_i-\mu_i \geq 0$ for any $i$ and $\sum_{i=0}^d \mu_i = n$. 
	 Then $\ab$ can be written as
	\[
	\ab=\sum_{i=0}^d(\lambda_i-\mu_i)\vb_i+\sum_{i=0}^{d}\mu_i\vb_{i}.
	\]
	Since $\sum_{i=0}^d(\lambda_i-\mu_i)=n$ and $\lambda_i - \mu_i \geq 0$ for each $i$,  one has $\sum_{i=0}^d(\lambda_i-\mu_i)\vb_i \in n\Pc$. Moreover, since  $\sum_{i=0}^d \mu_i = n$ and each $\mu_i$ is a nonnegative integer, we obtain $\sum_{i=0}^{d}\mu_i\vb_{i} \in n\Pc \cap \ZZ^N$.
	Hence $\ab \in n\Pc +(n\Pc \cap \ZZ^N)$. 
	Therefore, $n\Pc$ is $2$-convex-normal.
\end{proof}
The bound $n \geq \dim (\Pc)$ in Lemma \ref{lem:2cn} is optimal.

\begin{Example}
	{\rm 
	Let $\Pc \subset \RR^2$ be the lattice triangle with vertices $(0,0),(1,0),(0,1)$. Then $\Pc$ is not $2$-convex-normal. Indeed, a point $\xb=(2/3,2/3) \in 2 \Pc$ cannot be written as $\xb=\ab+\bb$ with $\ab \in \Pc \cap \ZZ^2$ and $\bb \in \Pc$.
	Since $\dim \Pc -1 =1$, $(\dim \Pc -1 )\Pc$ is not $2$-convex-normal. 
	}
\end{Example}

From Theorem \ref{thm:2cn} and Lemma \ref{lem:2cn} we obtain the following corollary:
\begin{Corollary}
	\label{cor:IDPdi}
	Let $\Pc_1,\ldots,\Pc_m \subset \RR^N$ be lattice polytopes. For each $i$, let $n_i$ be a positive integer with $n_i \geq \dim (\Pc_i)$.
	Then we obtain the following:
	\begin{enumerate}
		\item[{\rm (1)}] {\rm (\cite{Hig})} $n_1\Pc_1+\cdots+n_m\Pc_m$ is IDP;
		\item[{\rm (2)}] 
		$n_1\Pc_1*\cdots*n_m\Pc_m$ is IDP if (and only if) $(n_1\Pc_1,\ldots,n_m\Pc_m)$ is IDP. 
	\end{enumerate}
\end{Corollary}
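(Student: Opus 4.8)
The plan is to read off the corollary directly from Theorem~\ref{thm:2cn} and Lemma~\ref{lem:2cn}, with essentially no extra work. First I would invoke Lemma~\ref{lem:2cn}: since $n_i \geq \dim(\Pc_i)$ for each $i$, every dilated polytope $n_i\Pc_i$ is $2$-convex-normal. Hence $n_1\Pc_1,\ldots,n_m\Pc_m$ is a family of $2$-convex-normal lattice polytopes, and Theorem~\ref{thm:2cn} applies to it verbatim.

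Next I would apply Theorem~\ref{thm:2cn}(1) to this family, which gives at once that $n_1\Pc_1+\cdots+n_m\Pc_m$ is IDP; this is precisely Higashitani's Theorem~\ref{Higa}(1), now recovered as a special case. For (2), Theorem~\ref{thm:2cn}(2) applied to the same family shows that $n_1\Pc_1*\cdots*n_m\Pc_m$ is IDP whenever the tuple $(n_1\Pc_1,\ldots,n_m\Pc_m)$ is IDP. The reverse implication I would deduce not from $2$-convex-normality but from Theorem~\ref{thm;relation}(1): if $n_1\Pc_1*\cdots*n_m\Pc_m$ is IDP, then applying Theorem~\ref{thm;relation}(1) with $\Pc_i$ replaced by $n_i\Pc_i$ shows that $(a_1(n_1\Pc_1),\ldots,a_m(n_m\Pc_m))$ is IDP for all nonnegative integers $a_1,\ldots,a_m$, and the choice $a_1=\cdots=a_m=1$ gives that $(n_1\Pc_1,\ldots,n_m\Pc_m)$ is IDP.

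I do not expect any real obstacle here: the corollary is purely an assembly of the two preceding results, and all the genuine content sits in the Carath\'eodory-type estimate behind Lemma~\ref{lem:2cn} and in the decomposition arguments behind Theorem~\ref{thm:2cn}. The only subtlety I would flag in the write-up is that the ``(and only if)'' in (2) is really two statements of different origin---the ``if'' half uses $2$-convex-normality through Theorem~\ref{thm:2cn}(2), whereas the ``only if'' half is unconditional and comes from Theorem~\ref{thm;relation}(1). This also makes transparent why the answer to Question~\ref{ques}(1) is negative in general: the Cayley sum $n_1\Pc_1*\cdots*n_m\Pc_m$ can fail to be IDP precisely because the tuple $(n_1\Pc_1,\ldots,n_m\Pc_m)$ can fail to be IDP, and one should exhibit such an example separately.
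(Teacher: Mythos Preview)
Your proposal is correct and matches the paper's approach exactly: the corollary is stated immediately after Theorem~\ref{thm:2cn} and Lemma~\ref{lem:2cn} with no proof beyond citing those two results. Your extra remark that the ``only if'' direction in (2) is already contained in Theorem~\ref{thm;relation}(1) is precisely how the paper justified the parenthetical ``(and only if)'' in Theorem~\ref{thm:2cn}(2), so nothing new is needed there either.
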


Since each of the polytopes $\Pc$ and $\Qc$ in Example \ref{ex:cn-counter} is of dimension $2$, the bound $n_i \geq \dim (\Pc_i)$ in Corollary \ref{cor:IDPdi} is optimal.

Now, we see that the answer to Question \ref{ques} (1) is negative in general.
\begin{Example}
	\label{ex:IDPcounter}
	{\em 
	Let $\Pc_1 \subset \RR^2$ be the line segment from $(0,0)$ to $(1,2)$ and $\Pc_2 \subset \RR^2$ the line segment from $(0,0)$ to $(1,0)$.
	Then for any positive integers $n_1,n_2$, $(n_1\Pc_1,n_2\Pc_2)$ is not IDP.
	Indeed, one has $(1,1) \in (n_1\Pc_1+n_2\Pc_2) \cap \ZZ^2$.
	On the other hand, since 
	\[
	(n_1 \Pc_1 \cap \ZZ^2)+(n_2 \Pc_2 \cap \ZZ^2)=
	\{
	(a+b,2a) : a,b \in \ZZ, 0 \leq a \leq n_1, 0 \leq b \leq n_2
	\},
	\]
	we obtain $(1,1) \notin (n_1 \Pc_1 \cap \ZZ^2)+(n_2 \Pc_2 \cap \ZZ^2)$.
	Hence $(n_1\Pc_1,n_2\Pc_2)$ is not IDP.
	Thus, it follows from Corollary  \ref{cor:IDPdi}  that $n_1\Pc_1*n_2\Pc_2$ is not IDP.
	Therefore, the answer to Question \ref{ques} (1) is negative in general.
}
\end{Example}

Finally, we give another corollary of Theorem \ref{thm:2cn}.
\begin{Corollary}
	Let $\Pc_1,\ldots,\Pc_m \subset \RR^N$ be lattice polytopes. Suppose that for each $i$, every edge of $\Pc_i$ has lattice length $\geq 2\dim(\Pc_i)(\dim(\Pc_i)+1)$.
	Then we obtain the following:
	\begin{enumerate}
		\item[{\rm (1)}]  $\Pc_1+\cdots+\Pc_m$ is IDP;
		\item[{\rm (2)}] 
		$\Pc_1*\cdots*\Pc_m$ is IDP if (and only if) $(\Pc_1,\ldots,\Pc_m)$ is IDP. 
	\end{enumerate}
\end{Corollary}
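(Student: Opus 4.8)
The plan is to reduce this corollary to Theorem \ref{thm:2cn} by invoking the cited edge-length criterion for $2$-convex-normality. The key input is the result of Gubeladze \cite[Theorem 1.2]{Gub} (already recalled in the introduction of the excerpt): a lattice polytope of dimension $d$ all of whose edges have lattice length $\geq 2d(d+1)$ is $2$-convex-normal. First I would observe that each hypothesis ``every edge of $\Pc_i$ has lattice length $\geq 2\dim(\Pc_i)(\dim(\Pc_i)+1)$'' is precisely the hypothesis of that criterion applied to $\Pc_i$ with $d=\dim(\Pc_i)$, so each $\Pc_i$ is $2$-convex-normal.

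Once every $\Pc_i$ is known to be $2$-convex-normal, statement (1) is immediate from Theorem \ref{thm:2cn} (1), and statement (2) is immediate from Theorem \ref{thm:2cn} (2): the Cayley sum $\Pc_1*\cdots*\Pc_m$ is IDP if and only if the tuple $(\Pc_1,\ldots,\Pc_m)$ is IDP. (The ``only if'' direction in parentheses does not even use $2$-convex-normality; it is the general fact recorded in Theorem \ref{thm;relation} (1), since IDP-ness of each $\Pc_i$ and of the tuple is necessary for the Cayley sum to be IDP. In fact one should note here that when each $\Pc_i$ is $2$-convex-normal it is automatically IDP by \cite[Corollary 5]{HH}, so the two displayed conditions in Theorem \ref{thm:2cn} (2) really do match up.)

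There is essentially no obstacle: the corollary is a direct specialization, and the only thing to be careful about is the bookkeeping of which dimension appears in the edge-length bound — the bound for $\Pc_i$ involves $\dim(\Pc_i)$, not the dimension $N$ of the ambient space or the dimension of the Minkowski/Cayley sum. I would simply state the chain $\text{(edge-length condition on each }\Pc_i\text{)} \Rightarrow \text{(each }\Pc_i\text{ is }2\text{-convex-normal)} \Rightarrow \text{(conclusion via Theorem \ref{thm:2cn})}$ and be done. If anything, the one point worth a sentence is that Theorem \ref{thm:2cn} is stated for polytopes in a common $\RR^N$, which is exactly the setup here, so no embedding adjustment is needed.
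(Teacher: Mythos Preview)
Your proposal is correct and matches the paper's own proof, which simply says ``This follows from Theorem~\ref{thm:2cn} and \cite[Corollary 6]{HH}.'' The only cosmetic difference is that you cite \cite[Theorem 1.2]{Gub} for the edge-length $\Rightarrow$ $2$-convex-normal implication, whereas the paper cites the equivalent statement recorded as \cite[Corollary 6]{HH}; the logical structure is identical.
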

\begin{proof}
	This follows from Theorems \ref{thm:hh} and \ref{thm:2cn}.
\end{proof}

%\begin{Theorem}
%	Let $\Pc \subset \RR^N$ be a lattice polytope of degree $1$.
%	Then $\Pc$ possesses the integer decomposition property.
%\end{Theorem}

\section{Minkowski sums and Cayley sums of level polytopes}
In this section, we will give an answer to Question \ref{ques} (2).
First, we determine when the Minkowski sums and the Cayley sums of 2-convex-level lattice polytopes with interior lattice points are level.
\begin{Theorem}
	\label{thn:2cl}
	Let $\Pc_1,\ldots,\Pc_m \subset \RR^N$ be  2-convex-level lattice polytopes with  interior lattice points.
	Then we obtain the following:
	\begin{enumerate}
		\item[{\rm (1)}] The Minkowski sum $\Pc_1+\cdots+\Pc_m$ is level of index $1$;
		\item[{\rm (2)}] The Cayley sum $\Pc_1*\cdots*\Pc_m$ is level of index $m$.
	\end{enumerate}
\end{Theorem}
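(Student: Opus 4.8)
The plan is to reduce both parts to a single dilation identity for condition~(\ref{2cl}): if a lattice polytope $\Pc\subseteq\RR^N$ satisfies~(\ref{2cl}), then for every integer $n\geq 1$,
\[
\inter(n\Pc)=\inter(\Pc)+\underbrace{(\Pc\cap\ZZ^N)+\cdots+(\Pc\cap\ZZ^N)}_{n-1}
\]
as subsets of $\RR^N$. I would prove this by strong induction on $n$: the cases $n=1,2$ are immediate, the second being~(\ref{2cl}). For $n\geq 3$, write $n\Pc=2\Pc+(n-2)\Pc$ and use additivity of relative interiors under Minkowski sums (already invoked in the proof of Proposition~\ref{prop:2cl}) to get $\inter(n\Pc)=\inter(2\Pc)+\inter((n-2)\Pc)$; plugging in~(\ref{2cl}) and the induction hypothesis leaves a stray summand $\inter(\Pc)+\inter(\Pc)$, which I would rewrite as $\inter(2\Pc)$ and then reduce by~(\ref{2cl}) once more to reach the claimed form. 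Intersecting with $\ZZ^N$ and using that a sum of lattice points is a lattice point then gives $\inter(n\Pc)\cap\ZZ^N=(\inter(\Pc)\cap\ZZ^N)+((n-1)\Pc\cap\ZZ^N)$ whenever $\inter(\Pc)\cap\ZZ^N\neq\emptyset$.

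For part~(1), put $\Qc=\Pc_1+\cdots+\Pc_m$. Choosing an interior lattice point $p_i\in\inter(\Pc_i)\cap\ZZ^N$ for each $i$, the sum $p_1+\cdots+p_m$ is an interior lattice point of $\Qc$, so $\Qc$ has index $1$. For $n\geq 1$, combining $\inter(n\Qc)=\inter(n\Pc_1)+\cdots+\inter(n\Pc_m)$ with the dilation identity for each $\Pc_i$ yields
\[
\inter(n\Qc)=\inter(\Qc)+\sum_{i=1}^{m}\;\underbrace{(\Pc_i\cap\ZZ^N)+\cdots+(\Pc_i\cap\ZZ^N)}_{n-1}.
\]
Given $\xb\in\inter(n\Qc)\cap\ZZ^N$, decompose $\xb=\cb+\zb$ with $\cb\in\inter(\Qc)$ and $\zb$ a sum of lattice points drawn from the $\Pc_i$; then $\zb\in\ZZ^N$, so $\cb\in\inter(\Qc)\cap\ZZ^N$ and $\zb\in(n-1)\Qc\cap\ZZ^N$. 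This is the nontrivial inclusion of the level identity (the reverse is immediate, since $\Qc$ and $(n-1)\Qc$ span parallel affine subspaces), so $\Qc$ is level of index $1$.

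For part~(2), put $\Pc=\Pc_1*\cdots*\Pc_m$. Projection to the first $m$ coordinates maps $\inter(t\Pc)$ into the relative interior of $t\,\conv(\eb_1,\ldots,\eb_m)$, so an interior lattice point of $t\Pc$ forces $t\geq m$; for $t=m$, Lemma~\ref{decomp2} with all $a_i=1$ shows $(\eb_1+\cdots+\eb_m,\,p_1+\cdots+p_m)$ is an interior lattice point of $m\Pc$, so the index is exactly $m$. Now fix $n\geq m$ and $\xb\in\inter(n\Pc)\cap\ZZ^{m+N}$; its first $m$ coordinates are $\sum_{i=1}^{m}t_i\eb_i$ with integers $t_i\geq 1$ and $\sum_i t_i=n$, and by Lemma~\ref{decomp2} the remaining coordinates $\yb$ satisfy $\yb\in\inter(\sum_i t_i\Pc_i)\cap\ZZ^N$. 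Arguing as in part~(1) (via $\inter(\sum_i t_i\Pc_i)=\sum_i\inter(t_i\Pc_i)$ and the dilation identity), write $\yb=\cb+\zb$ with $\cb\in\inter(\Pc_1+\cdots+\Pc_m)\cap\ZZ^N$ and $\zb=\zb^{(1)}+\cdots+\zb^{(m)}$, where each $\zb^{(i)}$ is a sum of $t_i-1$ lattice points of $\Pc_i$, hence $\zb^{(i)}\in(t_i-1)\Pc_i$. Then $(\eb_1+\cdots+\eb_m,\cb)\in\inter(m\Pc)\cap\ZZ^{m+N}$ by Lemma~\ref{decomp2}, and $\xb-(\eb_1+\cdots+\eb_m,\cb)=(\sum_i(t_i-1)\eb_i,\,\zb)\in(n-m)\Pc\cap\ZZ^{m+N}$ by Lemma~\ref{decomp1}, since $\zb\in\sum_i(t_i-1)\Pc_i$. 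This gives the required decomposition (the reverse inclusion again being immediate), so $\Pc$ is level of index $m$.

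The applications of Lemmas~\ref{decomp1} and~\ref{decomp2} are routine and follow the pattern of the proof of Theorem~\ref{thm;relation}. The step needing care is the dilation identity: condition~(\ref{2cl}) only relates $\inter(2\Pc)$ to $\inter(\Pc)$ and $\Pc\cap\ZZ^N$, and in the inductive step one genuinely produces an extra term $\inter(\Pc)+\inter(\Pc)$ that must be recognized as $\inter(2\Pc)$ and pushed back through~(\ref{2cl}) before the induction closes; I expect getting that bookkeeping right (for all $n$ simultaneously) to be the main obstacle.
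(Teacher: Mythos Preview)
Your proof is correct and follows essentially the same route as the paper's: both hinge on the dilation identity $\inter(n\Pc)=\inter(\Pc)+(\Pc\cap\ZZ^N)+\cdots+(\Pc\cap\ZZ^N)$ (which the paper already records in the proof of Proposition~\ref{prop:2cl}) together with Lemmas~\ref{decomp1} and~\ref{decomp2}. The one structural difference is that the paper deduces~(1) from~(2) via Theorem~\ref{thm;relation}(2), whereas you prove~(1) directly---but your direct argument is just the simplified version of the same decomposition used for~(2).
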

\begin{proof}
From Theorem \ref{thm;relation} (2), it is enough to show only the claim (2).
Since each $\Pc_i$ has interior lattice points, from Lemma \ref{decomp1}, $m(\Pc_1*\cdots*\Pc_m)$ has interior lattice points.
On the other hand, if for a positive integer $t$, $\xb$ is an interior lattice point in $t(\Pc_1*\cdots*\Pc_m)$, then $\xb$ belongs to $\{\sum_{i=1}^{m}a_i \eb_i\} \times \RR^{N}$ with positive integers $a_1,\ldots,a_m$.
Hence $t \geq m$.
This implies $\min \{t \in \ZZ_{\geq 1} : {\rm int}(t(\Pc_1*\cdots*\Pc_m)) \cap \ZZ^{m+N} \neq \emptyset \}=m$.
	Let $n \geq m+1$ and $\ab \in \text{int}(n(\Pc_1*\cdots* \Pc_m)) \cap \ZZ^{m+N}$.
	For $1 \leq i \leq m$, we let $\vb_{i1},\ldots,\vb_{ir_i}$ be the vertices of $\Pc_i$, where $r_i$ is the number of vertices of $\Pc_i$.
	Then since $\ab$ is an interior lattice point in $n(\Pc_1*\cdots*\Pc_m)$, we can write 
	$$\ab=\sum\limits_{i=1}^{m} \sum_{j=1}^{r_i}\lambda_{ij}(\eb_i,\vb_{ij}),$$
	where for any $1 \leq i \leq m$ and $1 \leq j \leq r_i$, $0<\lambda_{ij}$, and $\sum_{i=1}^{m} \sum_{j=1}^{r_i}\lambda_{ij}=n$.
	For each $1 \leq i \leq m$, set
	$t_i=\sum_{j=1}^{r_i}\lambda_{ij} \in \ZZ_{\geq 1}$ and $\ab'_i=\sum_{j=1}^{r_i}\lambda_{ij}(\eb_i,\vb_{ij}) \in \text{int}(t_i(\{\eb_i \} \times \Pc_i))$.
	Since each $\Pc_i$ is 2-convex-level, for each $i$, there exist $\bb_{i1} \in {\rm int}(\{\eb_i\} \times \Pc_i)$ and $\bb_2,\ldots,\bb_{t_i} \in (\{\eb_i \} \times \Pc_i) \cap \ZZ^{m+N}$ with $\ab'_i=\bb_{i1}+\bb_{i2}+\cdots+\bb_{it_i}$.
	Hence one has
	\[
	\ab=(\bb_{11}+\cdots+\bb_{m1})+\sum_{i=1}^m \sum_{j=2}^{t_i}\bb_{ij}.
	\]
 Then it follows that $\bb_{11}+\cdots+\bb_{m1} \in  {\rm int}(m (\Pc_1*\cdots*\cdots\Pc_m))$ and $\sum_{i=1}^m \sum_{j=2}^{t_i}\bb_{ij}\in (n-m)(\Pc_1*\cdots*\Pc_m) \cap \ZZ^{N+m}$.
 Since $\ab$ is a lattice point, $\bb_{11}+\cdots+\bb_{m1}$ must be a lattice point.
 Hence one has $\bb_{11}+\cdots+\bb_{m1}  \in {\rm int}(m (\Pc_1*\cdots*\Pc_m)) \cap \ZZ^{m+N}$.
	Therefore, $\Pc_1*\cdots*\Pc_m$ is level of index $m$, as desired.
\end{proof}

Next, we see when a dilated polytope is 2-convex-level.
\begin{Lemma}
	\label{lem:2cl}
	Let $\Pc \subset \RR^N$ be a lattice polytope.
	Then for any integer $n \geq \dim(\Pc)+1$,
$n\Pc$ has interior lattice points and is 2-convex-level.
\end{Lemma}

\begin{proof}
	Set $d=\dim(\Pc)$
	and
	$\ab \in \textnormal{int}(2n\Pc)$.
	Then there exist $d'+1$ affinely independent vertices $\vb_0,\ldots,\vb_{d'}$ of $\Pc$ such that $\ab=\sum_{i=0}^{d'}\lambda_i\vb_i$, where
	$d' \leq d$, for each $1 \leq i \leq d'$, $\lambda_i > 0$, and $\sum_{i=0}^{d'}\lambda_i=2n$.
	Since $2n \geq d+2 \geq d'+2$, there exists an index $j$ such that $\lambda_j > 1$.
	Hence $\ab=(\sum_{i=0}^{d}\lambda_i\vb_i-\vb_j)+\vb_j$ and $(\sum_{i=0}^{d}\lambda_i\vb_i-\vb_j) \in \textnormal{int}((2n-1)\Pc)$.
	From the fact that $n+1 \geq d+2$, $\ab$ can be written as
	$\ab=\ab'+\vb_{j_1}+\cdots+\vb_{j_n}$, where $\ab' \in \textnormal{int}(n\Pc)$.
	Hence $\ab \in \textnormal{int}(n\Pc)+(n\Pc \cap \ZZ^N)$, as desired.
\end{proof}
The integer $\dim(\Pc) +1$ in Lemma \ref{lem:2cl} is optimal.
\begin{Example}
	{\rm 
	Let $\Pc \subset \RR^2$ be the lattice triangle with vertices $(0,0),(1,0),(0,1)$. Then $2\Pc$ is not $2$-convex-level. Indeed, a point $\xb=(1,1) \in {\rm int} (4\Pc)$ cannot be written as $\xb=\ab+\bb$ with $\ab \in \Pc \cap \ZZ^2$ and $\bb \in {\rm int}(\Pc)$.
	Since $\dim \Pc=2$, $(\dim \Pc) \Pc$ is not $2$-convex-level. 
	}
\end{Example}

From Theorem \ref{thn:2cl} and Lemma \ref{lem:2cl} we obtain the following corollary.
\begin{Corollary}
	\label{cor:leveldi}
	Let $\Pc_1,\ldots,\Pc_m \subset \RR^N$ be lattice polytopes. For each $i$, let $n_i$ be a positive integer with $n_i \geq \dim (\Pc_i)+1$.
	Then we obtain the following:
	\begin{enumerate}
		\item[{\rm (1)}] {\rm (\cite{Hig})} $n_1\Pc_1+\cdots+n_m\Pc_m$ is level of index $1$;
		\item[{\rm (2)}] 
		$n_1\Pc_1*\cdots*n_m\Pc_m$ is level of index $m$. 
	\end{enumerate}
\end{Corollary}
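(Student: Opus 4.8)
The plan is to derive Corollary \ref{cor:leveldi} as an immediate consequence of two ingredients already established earlier in the paper: Theorem \ref{thn:2cl} and Lemma \ref{lem:2cl}. The point is that Lemma \ref{lem:2cl} supplies exactly the hypothesis needed to apply Theorem \ref{thn:2cl} to the dilated polytopes $n_i\Pc_i$.

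\begin{proof}
	For each $i$ we have $n_i \geq \dim(\Pc_i)+1$, so by Lemma \ref{lem:2cl} the lattice polytope $n_i\Pc_i$ has interior lattice points and satisfies the condition (\ref{2cl}). Applying Theorem \ref{thn:2cl} to the lattice polytopes $n_1\Pc_1, \ldots, n_m\Pc_m$ yields that the Minkowski sum $n_1\Pc_1+\cdots+n_m\Pc_m$ is level of index $1$, proving (1), and that the Cayley sum $n_1\Pc_1*\cdots*n_m\Pc_m$ is level of index $m$, proving (2).
\end{proof}

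There is essentially no obstacle here, since all the work has been packaged into the preceding results; the only point worth double-checking is that the hypotheses of Theorem \ref{thn:2cl} — namely that each polytope in the list has interior lattice points and satisfies (\ref{2cl}) — match precisely the two conclusions of Lemma \ref{lem:2cl}, which they do. For (1) one could alternatively cite \cite{Hig} directly; the proof given here recovers it uniformly alongside (2).
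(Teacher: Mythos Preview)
Your proof is correct and matches the paper's own approach exactly: the paper simply states that the corollary follows from Theorem \ref{thn:2cl} and Lemma \ref{lem:2cl}, and you have spelled out that deduction.
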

Therefore, the answer to Question \ref{ques} (2) is positive.
Finally, we see that the bound $n_i \geq \dim (\Pc_i)+1$ in Corollary \ref{cor:leveldi} is optimal.
\begin{Example}
	{\rm 
	Let $\Pc_1,\Pc_2,\Pc_3 \subset \RR^3$ be the lattice polytopes with
	\begin{align*}
			\Pc_1&={\rm conv} \{ (0,0,0),(3,1,3)\}, 	\\
			\Pc_2&={\rm conv} \{ (0,0,0),(2,2,2)\}, 	\\
			\Pc_3&={\rm conv} \{ (0,0,0),(1,3,3)\}.
	\end{align*}
	Then each of $\Pc_1,\Pc_2,\Pc_3$ is of dimension $1$.
	Moreover, $\Pc_1+\Pc_2+\Pc_3$ is not Gorenstein with a unique interior lattice point, in particular, not level. Hence $\Pc_1 * \Pc_2*\Pc_3$ is not also level.
	}
\end{Example}

\end{document}